\newtheorem{theorem} {Theorem}
\newtheorem{lemma}[theorem] {Lemma}
\newtheorem{corollary} [theorem]{Corollary}
\newtheorem{proposition}[theorem] {Proposition}
\newtheorem{theorem2} [theorem]{Theorem}
\newtheorem{corollary2} [theorem]{Corollary}
\newtheorem{remark} [theorem]{Remark}
\theoremstyle{definition}
\theoremstyle{definition}
\newtheorem{example} [theorem]{Example}
\begin{document}
\title[varieties of complexes]{Varieties of complexes of fixed rank}
\author{Darmajid}
\author{Bernt Tore Jensen}

\begin{abstract}
We study varieties of complexes of projective modules with fixed ranks, and
relate these varieties to the varieties of their homologies. We show that
for an algebra of global dimension at most two, these two varieties are related by a
pair of morphisms which are smooth with irreducible fibres.\\
\\
{\scriptsize \textit{Keywords and phrases}: complexes projective varieties, algebra of global dimension at most two, homologies of complexes, smooth morphisms.}
\end{abstract}
\maketitle

\section*{Introduction}

Let $\Bbbk$ be an algebraically closed field.
For a finite dimensional $\Bbbk$-algebra $A$, Huisgen-Zimmermann and
Saorin \cite{SH-Z01} define an affine
variety which parameterizes bounded complexes of $A$-modules.
Jensen, Su, and Zimmermann \cite{JSZ05} use varieties of
complexes of projective modules to study degeneration in derived categories.
These varieties, denoted by $comproj_{\mathbf{d}}^{A}$, are
defined as the affine variety of all differentials
$( \partial _{i})_{i\in \mathbb{Z}}$
for a fixed choice of projective modules with multiplicities of indecomposable projective
summands encoded
by a sequence of vectors $\mathbf{d}=(\mathbf{d}_i)_{i\in \mathbb{Z}}$,
called a dimension array.
In \cite{JS05} Jensen and Su use these
to show that there is a well-defined notion of type of singularity in the derived
category of a finite dimensional algebra, and prove that this type coincides
with the type of singularity for the homology for hereditary algebras.

The purpose of this paper is to study geometric properties of
varieties of complexes of projective modules with a fixed rank and to
relate properties of these varieties to varieties of
representations via the homology functor.

Let $\Lambda$ denote a $\Bbbk$-algebra of global dimension at most two. We define $comproj_{\mathbf{d},\mathbf{r}}^{\Lambda}$
to be subset of $comproj_{\mathbf{d}}^{\Lambda}$ where the
differential $\partial=(\partial_i)_{i\in \mathbb{Z}}$ has fixed ranks
encoded in the sequence of dimension vectors $\mathbf{r}$. Rank
is an lower semi-continuous function on $comproj_{\mathbf{d}}^{\Lambda}$,
and so we have a disjoint union of locally closed subsets
$$comproj_{\mathbf{d}}^{\Lambda}=
\bigcup_{\mathbf{r}}comproj_{\mathbf{d},\mathbf{r}}^{\Lambda}.$$

We construct a variety $comhom_{\mathbf{d},\mathbf{r}}^{\Lambda}$ whose points
are complexes together with their homology, and two morphisms $\pi $ and $\rho $
$$\xymatrix{& comhom_{\mathbf{d},\mathbf{r}}^{\Lambda} \ar[dl]_\pi \ar[dr]^\rho & \\ comproj_{{\mathbf{d}},{\mathbf{r}}}^{\Lambda}
&& rep_{\mathbf{h}}^{\Lambda}}$$
such that $\rho \left( \pi ^{-1}\left(X\right) \right)$ is in the orbit of
the homology $H^{\ast }\left(
X\right) $ for all $X\in comproj_{\mathbf{d},\mathbf{r}}^{\Lambda}$ and $rep_{\mathbf{h}}^{\Lambda}$
is the variety of $\mathbb{Z}$-graded $\Lambda$-modules with dimension vectors given by the sequence $\mathbf{h}$.
Our main result is the following

\begin{theorem2}
The morphisms $\pi$ and $\rho $ are smooth with irreducible rational fibres.
\end{theorem2}

We prove the theorem by showing that $\rho$ and $\pi$ are compositions
of open immersions, vector bundles and $G$-bundles for irreducible algebraic
groups $G$.

We have an application of our result.

\begin{corollary2}
There is a bijection between
the set of irreducible components of $comproj_{{\mathbf{d}},{\mathbf{r}}}^{\Lambda}$ and the irreducible components of
$im(\rho)$.
\end{corollary2}

We also have the following special case.

\begin{corollary2}
If $\Lambda$ is hereditary, then $comproj_{\mathbf{d},\mathbf{r}}^{\Lambda}$ is irreducible, smooth and rational.
\end{corollary2}

The remainder of this paper is organized as follows. In Section 1 we recall basic
definitions on representations of quivers and the definition of
the variety $comproj_{\mathbf{d}}^{\Lambda}$. In Section 2 we give the definition
of $comhom_{\mathbf{d},\mathbf{r}}^{\Lambda}$ and the morphisms $\pi$ and $\rho$.
The smoothness of $\pi$ and $\rho$ are proved in Section 3, and in Section 4
we discuss some applications and examples.


\vspace{1.5cc}

\section{Preliminaries}

In this section we recall some facts on varieties of representation of quivers and
varieties of complexes. For details, we refer to \cite{ASS06},
\cite{R86}, \cite{JS05} and \cite{JSZ05}. Note that the assumption algebra on global
dimension at most two is not needed in the sequel of this section.

\subsection{Representations of Quivers}

A quiver $Q$ consist of a set of vertices $Q_0$, a set of arrows $Q_1$
and two maps $s,e:Q_{1}\rightarrow Q_{0}$ which assign
to each arrow $\alpha \in Q_{1}$ its source and end vertex, respectively.
The path algebra $\Bbbk Q$ has basis equal to the set of paths in
$Q$, and multiplication of two paths
$\beta$ and $\alpha$ is the composed path
$\beta \alpha$, if $\alpha$ ends where $\beta$ starts, and zero otherwise.
Any finite dimensional algebra $\Lambda$ is Morita equivalent to an algebra
$\Bbbk Q/\mathcal{I}$ for an admissible ideal $\mathcal{I}$. For simplicity we
assume that $\Lambda=\Bbbk Q/\mathcal{I}$.

A representation $V=\left( V_{a},V_{\alpha }\right)_{a\in Q_0,\alpha\in Q_1} $ of a quiver $Q$
consists of a $Q_0$-graded vector space, i.e. a family of vector spaces $V_{a}$ indexes by the vertices $a\in
Q_{0}$, together with a family of linear maps $V_{\alpha }:V_{s\left( \alpha
\right) }\rightarrow V_{e\left( \alpha \right) }$ indexed by the arrows
$\alpha \in Q_{1}$. The dimension vector $dim\left( V\right)\in \mathbb{N}^{Q_0}$ of $V$ is
the vector with components $dim_{\Bbbk }\left( V_a\right)$.
A representation of $\left( Q,\mathcal{I}\right) $ is a
representation with maps that satisfy the relations $\mathcal{I}$.

A homomorphism $\varphi :V\rightarrow W$ between two representations
$V$ and $W$ is a family of
$\Bbbk$-linear maps $\left( \varphi _{a}:V_{a}\rightarrow W_{a}\right) _{a\in
Q_{0}}$ such that for any arrow $\alpha :a\rightarrow b$ the equality $
W_{\alpha }\circ \varphi _{a}=\varphi _{b}\circ V_{\alpha }$ holds.
The vector space of homomorphisms, denoted by $Hom_{\Lambda}(V,W)$ is therefore
a subspace of the space of $Q_0$-graded maps $Hom(V,W)=\prod_{a\in Q_0}Hom(V_a,W_a)$. By the rank $rank(f)$ of
a homomorphism we mean the dimension vector of
the image $im(f)$. The kernel of $f$ is denoted by $ker(f)$.

The category of representations
is equivalent to the category of finite-dimensional left
$\Lambda$-modules. We use this equivalence to
identify modules with representations, and vice versa.

Given a dimension vector $\mathbf{d}=(d_a)_{a\in Q_0}$, we denote by
$rep_{\mathbf{d}}^{\Lambda}$ the affine variety of
representation $\left( Q,\mathcal{I}\right) $ with
$V_{a}=\Bbbk ^{d_{a}}$ for all $a\in
Q_{0} $. Using the standard basis of $\Bbbk ^{d_{a}}$,
any representation in
$rep_{\mathbf{d}}^{\Lambda}$ is given by a tuple of matrices.
The group $$Gl_\mathbf{d}=\prod_{a\in Q_0}Gl_{d_a}$$ acts on
$rep_{\mathbf{d}}^{\Lambda}$ by conjugation such that orbits correspond
to isomorphism classes of representations with dimension vector $\mathbf{d}$.
We denote the $Q_0$-graded
space $\oplus_{a\in Q_0}\Bbbk^{d_a}$ by $\Bbbk^\mathbf{d}$.

\subsection{The Variety of Complexes}

Let $\{P_a|a\in Q_0\}$ be a complete set of representatives of projective
indecomposable $\Lambda$-modules, one from each isomorphism class.
Given a dimension vector $\mathbf{d}=(d_a)_{a\in Q_0}$,
let $$P^\mathbf{d}=\bigoplus
\limits_{a\in Q_0}P_{a}^{d_{a}}.$$  Let $\Theta$ be the Cartan matrix
of $\Lambda$ with rows and columns indexed by $Q_0$. That is,
$\Theta$ is defined as the matrix with
$$\Theta \mathbf{d}=dim(P^\mathbf{d}).$$

Following \cite{JS05}, for every sequence of dimension
vectors $\mathbf{d}:\mathbb{Z}\rightarrow \mathbb{N
}_{0}^{Q_0}$ for which
$\mathbf{d}_{i}=\left( 0,0,\ldots ,0\right) $ for all $i\gg0$ and $i\ll0$,
$comproj_{\mathbf{d}}^{\Lambda}$ is the affine subvariety of
\begin{equation*}
\prod\limits_{i\in \mathbb{Z}}Hom_{\Lambda}(P^{\mathbf{d}
_{i}},P^{\mathbf{d}_{i-1}})
\end{equation*}%
consisting of sequences of maps $\left( \partial _{i}:P^{\mathbf{d}%
_{i}}\rightarrow P^{\mathbf{d}_{i-1}}\right) _{i\in \mathbb{Z}}$
such that $\partial _{i}\partial _{i+1}=0$ for all $i\in \mathbb{Z}$.
Clearly, $comproj_{\mathbf{d}}^{\Lambda}$ parameterizes complexes of
projective $\Lambda$-modules with fixed dimension vectors in each degree.
The sequence $\mathbf{d}$ is called a bounded dimension array.
If the sequence $\mathbf{d}$ is not bounded to the left, then
$comproj_{\mathbf{d}}^{\Lambda}$ is defined as a limit of affine varieties
of truncated complexes \cite{JS05}. If we do not explicitly say so,
any dimension array $\mathbf{d}$ is assumed to be bounded.

The group
\begin{equation*}
G_{\mathbf{d}}:=\prod\limits_{i\in \mathbb{Z}}Aut_{\Lambda}
P^{\mathbf{d}_{i}}
\end{equation*}
acts on $comproj_{\mathbf{d}}^{\Lambda}$ by conjugation and two complexes
in $comproj_{\mathbf{d}}^{\Lambda}$ are in the same orbit if and only if
they are quasi-isomorphic (\cite{JSZ05}, Lemma 1). Moreover, for any two
complexes $M$ and $N$ there exists a (not necessarily bounded) dimension array $\mathbf{d}$
and complexes $M^{\prime },N^{\prime }\in comproj_{\mathbf{d}}^{\Lambda}$ such
that $M$ and $N$ are quasi-isomorphic to $M^{\prime }$ and $N^{\prime }$,
respectively (\cite{JSZ05}, Lemma 2).

For a sequence $\mathbf{r}:\mathbb{Z}\rightarrow \mathbb{N}_{0}^{Q_{0}}$ we
define $comproj_{\mathbf{d},\mathbf{r}}^{\Lambda}$ to be the locally closed $G_{\mathbf{d}}$-stable
subvariety of $comproj_{\mathbf{d}}^{\Lambda}$ consisting of complexes $(\partial_i)_{i\in \mathbb{Z}}$ such that $rank\left( \partial _{i}\right)
={\mathbf r}_{i}$ for each $i\in \mathbb{Z}$.

\section{The Variety $comhom_{\mathbf{d},\mathbf{r}}^{\Lambda}$}

In this section we will construct a variety $comhom_{\mathbf{d},\mathbf{r}
}^{\Lambda }$ consisting of complexes together with their homology. Recall
that $\Lambda$ is assumed to be an algebra of global dimension at most 2.

Let $\left( \partial _{i}\right) _{i\in \mathbb{Z}}\in comproj_{{\mathbf{d}}%
, {\mathbf{r}}}^{\Lambda }$. We define two sequences
\begin{equation*}
\mathbf{k},\mathbf{h}:\mathbb{Z}\rightarrow \mathbb{N}_{0}^{Q_{0}}
\end{equation*}%
with ${\mathbf{k}}_{i}$ equal the dimension vector of the kernel of $%
\partial _{i-1}$ and $\mathbf{h}_{i}$ the dimension vector of homology in
degree $i-1$. That is, ${\mathbf{k}}_{i}=\Theta{\mathbf{d}}_{i-1}-{\mathbf{r}%
}_{i-1}$ and ${\mathbf{h}}_{i}={\mathbf{k}}_{i}-{\mathbf{r}}_{i}$, computed
component-wise.

\begin{lemma}
For any $\left( \partial _{i}\right) _{i\in \mathbb{Z}} \in comproj_{{%
\mathbf{d}},{\mathbf{r}}}^{\Lambda }$, the representation $ker\left(
\partial _{i}\right)$ is a projective representation.
\end{lemma}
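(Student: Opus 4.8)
The plan is to realise $ker(\partial_i)$ as an iterated syzygy and then invoke the hypothesis that $\Lambda$ has global dimension at most two to push its projective dimension down to zero. Fix an integer $i$ and write $N=im(\partial_i)$, a submodule of the projective module $P^{\mathbf{d}_{i-1}}$. First I would record the two short exact sequences
\begin{equation*}
0 \longrightarrow ker(\partial_i) \longrightarrow P^{\mathbf{d}_i} \longrightarrow N \longrightarrow 0, \qquad 0 \longrightarrow N \longrightarrow P^{\mathbf{d}_{i-1}} \longrightarrow coker(\partial_i) \longrightarrow 0 .
\end{equation*}
The first exhibits $ker(\partial_i)$ as a first syzygy of $N$ (the surjection being $\partial_i$ corestricted onto its image), and the second exhibits $N$ as a first syzygy of $coker(\partial_i)$; in both cases the middle term is projective, which is exactly what is needed to read off syzygies.

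Next I would run the standard dimension-shifting estimate. Writing $\Omega M$ for a first syzygy of a module $M$, the inequality $pd(\Omega M)\le \max\left(0,\,pd(M)-1\right)$ holds, with equality when $M$ is not projective and $\Omega M$ projective when $M$ is. Because $\Lambda$ has global dimension at most two, every finite dimensional $\Lambda$-module has projective dimension at most two; applying the estimate to the right-hand sequence shows that $N=im(\partial_i)$ has projective dimension at most one. Applying the estimate once more to the left-hand sequence then gives that $ker(\partial_i)$ has projective dimension at most zero, that is, $ker(\partial_i)$ is projective. Since nothing in the argument depends on $i$, the same conclusion holds for every differential of the complex.

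The computation is essentially formal once the two sequences are in place, so the point I expect to require the most care is the bookkeeping of the degenerate cases rather than the main estimate. Concretely, the inequality $pd(\Omega M)\le \max\left(0,\,pd(M)-1\right)$ must be applied in a form that does not introduce a spurious lower bound when $coker(\partial_i)$ or $N$ happens to already be projective, so that the bound \emph{at most two} genuinely propagates through \emph{at most one} to \emph{projective}. I would also check that the finiteness hypotheses are used correctly: everything here is finite dimensional over $\Bbbk$ and finitely generated, so each syzygy is again a finitely generated $\Lambda$-module and the global dimension bound applies at every stage, legitimising the two successive applications of dimension shifting.
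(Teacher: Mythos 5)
Your proposal is correct and is essentially the paper's own argument: the paper simply splices your two short exact sequences into the single four-term exact sequence $0\rightarrow ker(\partial_i)\rightarrow P^{\mathbf{d}_i}\rightarrow P^{\mathbf{d}_{i-1}}\rightarrow P^{\mathbf{d}_{i-1}}/im(\partial_i)\rightarrow 0$, observes that this is the start of a projective resolution of the cokernel, and concludes projectivity of $ker(\partial_i)$ from $gl.dim(\Lambda)\le 2$, which is exactly your two-step dimension shift written in one line. Your extra care about the degenerate cases where the cokernel or the image is already projective is sound but not needed beyond what the paper implicitly uses.
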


\begin{proof}
For any $i$, the sequence
\begin{equation*}
0\rightarrow ker\left( \partial _{i}\right)\rightarrow P^{\mathbf{d}%
_{i}}\rightarrow P^{\mathbf{d}_{i-1}}\rightarrow \frac{P^{\mathbf{d}_{i-1}}}{%
im\left( \partial _{i}\right) }\rightarrow 0
\end{equation*}
is the start of a projective resolution of $\frac{{P}^{\mathbf{d} _{i-1}}}{%
im\left( \partial _{i}\right) }$, and so $ker\left( \partial _{i}\right)$ is
projective, since $\Lambda $ has global dimension at most $2$.
\end{proof}

The lemma allows us to fix projective representations $M_{i}\in rep_{\mathbf{k}_{i}}^{\Lambda }$
such that $M_{i}\cong ker\left( \partial _{i-1}\right)$
for any $\left( \partial _{i}\right)\in comproj_{{\mathbf{d}},{\mathbf{r}}}^{\Lambda}$. Hence, there are
$\Lambda$-monomorphisms $\eta _{i}:M_{i}\rightarrow P^{\mathbf{d}_{i-1}}$,
$\Lambda$-homomorphisms $\phi _{i}:P^{\mathbf{d}_{i}}\rightarrow M_{i}$, and
epimorphisms $\gamma _{i}:M_{i}\rightarrow \Bbbk ^{\mathbf{h}_{i}}$ such
that $\partial _{i}=\eta _{i}\phi _{i}$, $\partial _{i-1}\eta _{i}=0$, and
$im\left( \phi _{i}\right) =ker\left( \gamma _{i}\right) $ for any $i\in \mathbb{Z}$.

\begin{lemma}
\label{comhom}Let $\left( \partial _{i}\right) _{i\in \mathbb{Z}}\in
comproj_{{\mathbf{d}},{\mathbf{r}}}^{\Lambda }$. Then,
\begin{equation*}
rank\left( \phi _{i}\right) =\mathbf{r}_{i}
\end{equation*}%
and there are unique representations $H_{i}\in rep_{\mathbf{h}_{i}}^{\Lambda }$ such that
$\gamma _{i}:M_{i}\rightarrow H_i$ are $\Lambda $-epimorphisms
and $H_{i}\cong
ker\left( \partial _{i-1}\right) /im\left( \partial _{i}\right) $ for any $%
i\in \mathbb{Z}$.
\end{lemma}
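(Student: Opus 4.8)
The plan is to exploit the factorisation $\partial_i=\eta_i\phi_i$ together with the facts that $\eta_i$ is a monomorphism with image $ker(\partial_{i-1})$ and that $\phi_i$ is a $\Lambda$-homomorphism. First I would establish the rank formula. Since $\eta_i$ is injective on each graded component, we have $im(\partial_i)=\eta_i\bigl(im(\phi_i)\bigr)$ and $\eta_i$ restricts to an isomorphism of graded vector spaces $im(\phi_i)\cong im(\partial_i)$. Computing dimension vectors componentwise gives $rank(\phi_i)=rank(\partial_i)=\mathbf{r}_i$, which is the first assertion.

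For the second assertion, the key point is that $im(\phi_i)$ is a $\Lambda$-submodule of $M_i$, being the image of the $\Lambda$-homomorphism $\phi_i$. Hence the quotient $M_i/im(\phi_i)$ is a $\Lambda$-module, and by the rank formula its dimension vector is $\mathbf{k}_i-\mathbf{r}_i=\mathbf{h}_i$. By construction $\gamma_i:M_i\rightarrow \Bbbk^{\mathbf{h}_i}$ is a graded-linear epimorphism with $ker(\gamma_i)=im(\phi_i)$, so it factors as the canonical projection $M_i\twoheadrightarrow M_i/im(\phi_i)$ followed by an isomorphism of graded vector spaces $\bar\gamma_i:M_i/im(\phi_i)\rightarrow \Bbbk^{\mathbf{h}_i}$. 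I would then define the representation $H_i$ by transporting the $\Lambda$-module structure of $M_i/im(\phi_i)$ along $\bar\gamma_i$; with this structure $\gamma_i:M_i\rightarrow H_i$ is tautologically a $\Lambda$-epimorphism and $H_i\cong M_i/im(\phi_i)$.

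Uniqueness follows because $\gamma_i$ is surjective on each component: for every arrow $\alpha$ the required relation $(H_i)_\alpha\circ(\gamma_i)_{s(\alpha)}=(\gamma_i)_{e(\alpha)}\circ(M_i)_\alpha$ determines $(H_i)_\alpha$ completely, so no other $\Lambda$-module structure on $\Bbbk^{\mathbf{h}_i}$ makes $\gamma_i$ a homomorphism. Finally, to identify $H_i$ with the homology I would transport everything across the isomorphism $M_i\cong ker(\partial_{i-1})$ given by $\eta_i$: under this isomorphism the submodule $im(\phi_i)$ is carried to $\eta_i\bigl(im(\phi_i)\bigr)=im(\partial_i)$, whence $H_i\cong M_i/im(\phi_i)\cong ker(\partial_{i-1})/im(\partial_i)$, the homology in degree $i-1$.

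The argument is largely bookkeeping, and I do not expect a serious obstacle. The only points requiring genuine care are the uniqueness claim, which rests on $\gamma_i$ being a componentwise surjection, and the compatibility of the identifications — in particular the verification that the submodule $im(\phi_i)\subseteq M_i$ corresponds to $im(\partial_i)\subseteq ker(\partial_{i-1})$ under $\eta_i$, which is precisely what the factorisation $\partial_i=\eta_i\phi_i$ guarantees.
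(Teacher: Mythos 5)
Your proposal is correct and follows essentially the same route as the paper's proof: both use $\partial_i=\eta_i\phi_i$ and the injectivity of $\eta_i$ to identify $im(\phi_i)$ with $im(\partial_i)$, both derive uniqueness of $H_i$ from the componentwise surjectivity of $\gamma_i$ (the paper writes this via an explicit right inverse), and both identify $H_i$ with the homology by showing $im(\eta_i)=ker(\partial_{i-1})$ and carrying $ker(\gamma_i)=im(\phi_i)$ across. The only cosmetic differences are that you phrase the rank computation via images where the paper uses kernels and the isomorphism theorem, and you are slightly more explicit about existence of the module structure on $\Bbbk^{\mathbf{h}_i}$ via transport of structure.
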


\begin{proof}
The condition $\partial _{i}=\eta _{i}\phi _{i}$ means that $ker\left( \phi
_{i}\right) \subseteq ker\left( \partial _{i}\right) $ and the injectivity
of $\eta _{i}$ ensures that $ker\left( \partial _{i}\right) \subseteq
ker\left( \phi _{i}\right) $. Thus, $ker\left( \phi _{i}\right) =ker\left(
\partial _{i}\right) $. By the isomorphism theorem, we have%
\begin{equation*}
im\left( \phi _{i}\right) \cong \frac{P^{\mathbf{d}_{i}}}{ker\left( \phi
_{i}\right) }=\frac{P^{\mathbf{d}_{i}}}{ker\left( \partial _{i}\right) }%
\cong im\left( \partial _{i}\right) .
\end{equation*}%
Hence,%
\begin{equation*}
rank\left( \phi _{i}\right) =rank\left( \partial _{i}\right) =\mathbf{r}_{i}.
\end{equation*}

Now, let $\left( \left( H_{i}\right) _{\alpha }\right) _{\alpha \in Q_{1}}$
be the structure of a representation on $\Bbbk ^{\mathbf{h}_{i}}$. Since $\gamma
_{i}:M_{i}\rightarrow \Bbbk ^{\mathbf{h}_{i}}$ is an epimorphism, we have $\left( H_{i}\right) _{\alpha }\circ \left( \gamma _{i}\right)
_{a}=\left( \gamma _{i}\right) _{b}\circ \left( M_{i}\right) _{\alpha }$ for
any arrow $\alpha :a\rightarrow b$ in $Q_{1}$. The surjectivity of $\gamma
_{i}$ implies that $\gamma _{i}$ has a right inverse $\gamma _{i}^{-1}=\left(
\left( \gamma _{i}\right) _{a}^{-1}\right) _{a\in Q_{0}}$. Thus,
\begin{equation*}
\left( H_{i}\right) _{\alpha }=\left( \gamma _{i}\right) _{b}\circ \left(
M_{i}\right) _{\alpha }\circ \left( \gamma _{i}\right) _{a}^{-1}.
\end{equation*}%
Hence, the representation $H_{i}=\left( \Bbbk ^{\left( \mathbf{h}_{i}\right)
_{a}},\left( H_{i}\right) _{\alpha }\right) $ is uniquely determined by the epimorphism $\gamma _{i}$.
We complete the proof by showing that
$H_{i}\cong ker\left( \partial _{i-1}\right) /im\left( \partial _{i}\right)$.
The condition $\partial _{i-1}\eta _{i}=0$ means that $im\left( \eta
_{i}\right) \subseteq ker\left( \partial _{i-1}\right) $. On the other hand,
by the isomorphism theorem and the injectivity of $\eta _{i}$, we obtain
$im\left( \eta _{i}\right) \cong \frac{M_{i}}{ker\left( \eta _{i}\right) }\cong M_{i}\cong ker\left( \partial _{i-1}\right) $.
Hence $im\left( \eta_{i}\right) =ker\left( \partial _{i-1}\right) $. Therefore,
\begin{equation*}
H_{i}\cong \frac{M_{i}}{ker\left( \gamma _{i}\right) }=\frac{M_{i}}{im\left(
\phi _{i}\right) }\cong \frac{ker\left( \partial _{i-1}\right) }{im\left(
\partial _{i}\right) }.
\end{equation*}
\end{proof}

Now, we define $comhom_{{\mathbf{d}},{%
\mathbf{r}}}^{\Lambda }$ to be the locally closed subvariety of the affine
space
\begin{eqnarray*}
&&\prod\limits_{i\in \mathbb{Z}}\left( Hom_{\Lambda }\left( P^{\mathbf{d}%
_{i}},P^{\mathbf{d}_{i-1}}\right) \times Hom_{\Lambda }\left( M_{i},P^{%
\mathbf{d}_{i-1}}\right) \times \right.   \notag \\
&&\ \ \ \ \ \left. Hom_{\Lambda }\left( P^{\mathbf{d}_{i}},M_{i}\right)
\times Hom_{\Lambda }\left( M_{i},\Bbbk ^{\mathbf{h}_{i}}\right) \times rep_{%
\mathbf{h}_{i}}^{\Lambda }\right)
\end{eqnarray*}%
consisting of tuples and representations $\left( \partial _{i},\eta
_{i},\phi _{i},\gamma _{i},H_{i}\right) _{i\in \mathbb{Z}},$
\begin{equation*}
\xymatrix{\cdots \ar[r]^{\partial_{i+1}} & P^{\mathbf{d}_i}
\ar[rr]^{\partial_i} \ar[dr]^{\phi_i} & & P^{\mathbf{d}_{i-1}}
\ar[r]^{\partial_{i-1}} & \cdots \\ & & M_i \ar[d]^{\gamma_i}
\ar[ur]^{\eta_i} \\ && H_i},
\end{equation*}%
defined by the conditions $\partial _{i}\partial _{i+1}=0$, $\partial_{i}=\eta_{i}\phi_{i}$, $\partial _{i-1}\eta _{i}=0$, $\gamma _{i}\phi
_{i}=0$, $\eta _{i}$ is a $\Lambda$-monomorphism, $\phi _{i}$ has rank
$\mathbf{r}_{i}$, and $\gamma _{i}$ is an epimorphism. By Lemma %
\ref{comhom}, for any $\left( \partial _{i},\eta _{i},\phi _{i},\gamma
_{i},H_{i}\right) _{i\in \mathbb{Z}}\in comhom_{{\mathbf{d}},{\mathbf{r}}%
}^{\Lambda }$ we have $\left( \partial _{i}\right) _{i\in \mathbb{Z}}\in
comproj_{{\mathbf{d}},{\mathbf{r}}}^{\Lambda }$, $ker(\gamma _{i})=im(\phi
_{i})$, and $H_{i}$ is the unique point in $rep_{\mathbf{h}_{i}}^{\Lambda }$
such that $\gamma _{i}:M_i\rightarrow H_i$ is a $\Lambda $-epimorphism and $H_{i}\cong ker\left(
\partial _{i-1}\right) /im\left( \partial _{i}\right) $.

Let $$rep^\Lambda_{\mathbf{h}}=\prod_{i\in\mathbb{Z}}
rep^\Lambda_{\mathbf{h}_i},$$ which admits an action by $$Gl_\mathbf{h}=\prod_{i\in\mathbb{Z}}Gl_{\mathbf{h}_i}$$
such that orbits are isomorphism classes of $\mathbb{Z}$-graded modules.

The projections $\left( \partial _{i},\eta _{i},\phi _{i},\gamma _{i},H_{i}\right) _{i\in
\mathbb{Z}}\mapsto (\partial_{i})_{i\in \mathbb{Z}}$  and
$\left( \partial _{i},\eta _{i},\phi _{i},\gamma _{i},H_{i}\right) _{i\in
\mathbb{Z}}\mapsto (H_i)_{i\in \mathbb{Z}}$ induces a pair
of morphisms
$$\xymatrix{& comhom_{\mathbf{d},\mathbf{r}}^{\Lambda} \ar[dl]_\pi \ar[dr]^\rho & \\ comproj_{{\mathbf{d}},{\mathbf{r}}}^{\Lambda}
&&rep_{\mathbf{h}}^{\Lambda}}$$
where $\rho(\pi^{-1}(X))$ is the $Gl_\mathbf{h}$-orbit of the homology of
the complex $X$.
Also, $\pi(\rho^{-1}(Y))$ is the set
of complexes $X$, with $H^*X\cong Y$.
The actions lift to an action of $G_\mathbf{d}\times Gl_\mathbf{h}\times Aut_\Lambda M$ on $comhom_{\mathbf{d},\mathbf{r}}^{\Lambda}$, where
$M=\prod_{i\in \mathbb{Z}}M_i$ and $$Aut_\Lambda M=\prod_{i\in \mathbb{Z}}Aut_\Lambda M_i.$$ Moreover, given $Z,Z'\in comhom_{\mathbf{d},\mathbf{r}}^{\Lambda}$,  $Z$ and $Z'$
are conjugate if and only if $\pi(Z)$ and $\pi(Z')$ are conjugate. If $Z$ and $Z'$
are conjugate, then $\rho(Z)$ and $\rho(Z')$ are conjugate, but the converse
is not true in general, as complexes are not determined by their homology in
global dimension two.

The map $\pi$ is surjective. We describe the image of $\rho$. Let $$rep_{\mathbf{h},\mathbf{r}}^{\Lambda }\subseteq
rep_{\mathbf{h}}^{\Lambda }$$ be the subset of representations $(H_i)_{i\in \mathbb{Z}}$
which admit a presentation $$\xymatrix{P^{\mathbf{d}_i} \ar[r]
& M_i \ar[r] & H_i \ar[r] & 0}.$$
The following lemma is well known, but we include it for the sake of
completeness.

\begin{lemma}\label{openrep}
The subset $rep_{\mathbf{h},\mathbf{r}}^{\Lambda }$ is open
in $rep_{\mathbf{h}}^{\Lambda }$.
\end{lemma}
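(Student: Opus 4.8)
The plan is to reduce the statement to a single homological degree and then to recognise the set in question as a locus cut out inside $rep_{\mathbf{h}}^{\Lambda}$ by finitely many upper semi-continuous conditions. Since $rep_{\mathbf{h}}^{\Lambda}=\prod_{i}rep_{\mathbf{h}_{i}}^{\Lambda}$ and, as $\mathbf{d}$ is bounded, only finitely many $\mathbf{h}_{i}$ are nonzero, it suffices to treat one index $i$, the general case following by taking products of open sets. First I would record the structural fact that whether a representation $H$ of dimension vector $\mathbf{h}_{i}$ admits a presentation $P^{\mathbf{d}_{i}}\to M_{i}\to H\to 0$ depends only on the isomorphism type of $H$: if $Q_{0}\to H\to 0$ is a projective cover with syzygy $\Omega H=ker(Q_{0}\to H)$, then any epimorphism $M_{i}\to H$ lifts to a split epimorphism $M_{i}\to Q_{0}$, so its kernel is isomorphic to $R\oplus \Omega H$, where $R$ is the Krull--Schmidt complement of $Q_{0}$ in $M_{i}$. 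Consequently $H$ lies in $rep_{\mathbf{h}_{i},\mathbf{r}_{i}}^{\Lambda}$ if and only if $Q_{0}$ is a direct summand of $M_{i}$ and $R\oplus \Omega H$ is a quotient of $P^{\mathbf{d}_{i}}$.

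Next I would turn these two conditions into inequalities between multiplicities of simple modules in tops, using that a projective $P$ surjects onto $N$ precisely when $[top\,N:S_{a}]\le [top\,P:S_{a}]$ for every vertex $a$. Writing $m_{a}=[top\,M_{i}:S_{a}]$, and using $[top\,Q_{0}:S_{a}]=\dim Hom_{\Lambda}(H,S_{a})$ together with $[top\,\Omega H:S_{a}]=\dim Ext^{1}_{\Lambda}(H,S_{a})$, the first condition reads $\dim Hom_{\Lambda}(H,S_{a})\le m_{a}$ and the second reads $m_{a}-\dim Hom_{\Lambda}(H,S_{a})+\dim Ext^{1}_{\Lambda}(H,S_{a})\le (\mathbf{d}_{i})_{a}$ for all $a$. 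The function $H\mapsto \dim Hom_{\Lambda}(H,S_{a})$ is upper semi-continuous on $rep_{\mathbf{h}_{i}}^{\Lambda}$, so the first family of conditions already defines an open set. The main obstacle is the second family: it contains $\dim Hom_{\Lambda}(H,S_{a})$ with a negative sign, which is only lower semi-continuous, so the inequality is not visibly open.

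To overcome this I would invoke the Euler form $\langle -,-\rangle$ on the Grothendieck group, which depends only on dimension vectors. Since $\Lambda$ has global dimension at most two, $\langle \mathbf{h}_{i},e_{a}\rangle =\dim Hom_{\Lambda}(H,S_{a})-\dim Ext^{1}_{\Lambda}(H,S_{a})+\dim Ext^{2}_{\Lambda}(H,S_{a})$, where $e_{a}$ is the dimension vector of $S_{a}$. Substituting $\dim Ext^{1}_{\Lambda}(H,S_{a})-\dim Hom_{\Lambda}(H,S_{a})=\dim Ext^{2}_{\Lambda}(H,S_{a})-\langle \mathbf{h}_{i},e_{a}\rangle$ rewrites the second condition as $\dim Ext^{2}_{\Lambda}(H,S_{a})\le (\mathbf{d}_{i})_{a}-m_{a}+\langle \mathbf{h}_{i},e_{a}\rangle$, whose right-hand side is a constant independent of $H$. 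As $H\mapsto \dim Ext^{2}_{\Lambda}(H,S_{a})$ is again upper semi-continuous on $rep_{\mathbf{h}_{i}}^{\Lambda}$, this too defines an open set. Intersecting the two open loci over the finitely many relevant vertices and indices then shows that $rep_{\mathbf{h},\mathbf{r}}^{\Lambda}$ is open in $rep_{\mathbf{h}}^{\Lambda}$. The single point where the hypothesis on global dimension is essential is precisely this last step: it guarantees that the alternating sum of $\dim Ext$ terms terminates in degree two, so that the offending $Hom$ term can be absorbed into a semi-continuous $Ext^{2}$ term.
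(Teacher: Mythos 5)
Your proof is correct, and its skeleton is the same as the paper's: reduce to a single homological degree, characterize the existence of a presentation $P^{\mathbf{d}_i}\to M_i\to H\to 0$ in terms of the minimal presentation of $H$, and conclude from upper semi-continuity of $H\mapsto \dim \mathrm{Ext}^t(H,S_a)$. The difference lies in how much is made explicit. The paper only records that $\dim\mathrm{Ext}^0(H,S_a)$ and $\dim\mathrm{Ext}^1(H,S_a)$ give the multiplicities of $P_a$ in the two terms of the minimal presentation and then asserts that openness ``follows''; it never writes down the two resulting inequalities, and in particular never confronts the fact that the second one, $m_a-\dim\mathrm{Hom}(H,S_a)+\dim\mathrm{Ext}^1(H,S_a)\le(\mathbf{d}_i)_a$, contains $\dim\mathrm{Hom}(H,S_a)$ with a negative sign and so is not visibly an upper semi-continuous condition. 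You isolate exactly this obstacle and remove it by rewriting $\dim\mathrm{Ext}^1(H,S_a)-\dim\mathrm{Hom}(H,S_a)$ as $\dim\mathrm{Ext}^2(H,S_a)-\langle\mathbf{h}_i,e_a\rangle$ via the Euler form, which is legitimate precisely because $\Lambda$ has global dimension at most two; the condition then becomes an upper bound on the semi-continuous function $\dim\mathrm{Ext}^2(H,S_a)$. So your argument is best described as a completion of the paper's proof rather than a different route: it supplies the one nontrivial step hidden behind ``therefore it follows,'' and it has the added value of pinpointing where the global dimension hypothesis can be invoked. One caveat worth keeping in mind: the paper calls the lemma well known, and the statement is in fact true for arbitrary finite-dimensional algebras (e.g.\ by realizing the locus as the image of an open subset of a bundle of surjections $M_i\to H$ under the open bundle projection), so the global dimension assumption is a convenience of your particular reduction, not a necessity for the result.
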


\begin{proof}
It is enough to show that the subset $rep_{\mathbf{h}_{i},\mathbf{r}%
_{i}}^{\Lambda }$ is open in $rep_{\mathbf{h}_{i}}^{\Lambda }$ for a fixed
integer $i$. It is well known that the map $H\mapsto dim\left( \text{Ext}%
^{t}\left( H,S_{a}\right) \right) $ where $S_{a}$ is the simple top of
projective representation $P_{a}$, is upper semi continuous on $rep_{\mathbf{%
h}_{i}}^{\Lambda },$ see \cite{S85} for details. The dimension of Ext$%
^{0}(H,S_{a})$ and Ext$^{1}(H,S_{a})$ are equal to the multiplicities of the
projective $P_{a}$ in the first and second projective, respectively, of a
minimal projective presentation of $H$. Therefore it follows that the set of
representations $H$ admitting a projective presentation $P\rightarrow
M\rightarrow H\rightarrow 0$ with $P$ and $M$ fixed, is an open subvariety
of $rep_{\mathbf{h}_{i}}^{\Lambda }$. Hence, $rep_{\mathbf{h}_{i},\mathbf{r}%
_{i}}^{\Lambda }$ is open in $rep_{\mathbf{h}_{i}}^{\Lambda }$ and the lemma
follows.
\end{proof}

The fact that $im(\rho)$ open in $rep_{\mathbf{h}}^{\Lambda }$ follows from Lemma \ref{openrep} and the following lemma.

\begin{lemma}
$im(\rho)=rep_{\mathbf{h},\mathbf{r}}^{\Lambda }$.
\end{lemma}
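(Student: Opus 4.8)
The plan is to prove the equality $im(\rho)=rep_{\mathbf{h},\mathbf{r}}^{\Lambda}$ by establishing the two inclusions. For $im(\rho)\subseteq rep_{\mathbf{h},\mathbf{r}}^{\Lambda}$ I would take any point $Z=(\partial_i,\eta_i,\phi_i,\gamma_i,H_i)_{i\in\mathbb{Z}}$ of $comhom_{\mathbf{d},\mathbf{r}}^{\Lambda}$. By the defining conditions, $\gamma_i$ is an epimorphism and $im(\phi_i)=ker(\gamma_i)$, so the sequence $P^{\mathbf{d}_i}\rightarrow M_i\rightarrow H_i\rightarrow 0$ with maps $\phi_i$ and $\gamma_i$ is exact. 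This is precisely a presentation of the form required in the definition of $rep_{\mathbf{h},\mathbf{r}}^{\Lambda}$, whence $\rho(Z)=(H_i)\in rep_{\mathbf{h},\mathbf{r}}^{\Lambda}$.

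For the reverse inclusion I would start from $(H_i)_{i\in\mathbb{Z}}\in rep_{\mathbf{h},\mathbf{r}}^{\Lambda}$ and construct a preimage under $\rho$. For each $i$ choose an exact presentation $P^{\mathbf{d}_i}\rightarrow M_i\rightarrow H_i\rightarrow 0$ and call its maps $\phi_i$ and $\gamma_i$, so that $\gamma_i$ is an epimorphism and $im(\phi_i)=ker(\gamma_i)$. A dimension count gives $rank(\phi_i)=dim(ker(\gamma_i))=\mathbf{k}_i-\mathbf{h}_i=\mathbf{r}_i$. It then remains to produce monomorphisms $\eta_i:M_i\rightarrow P^{\mathbf{d}_{i-1}}$ for which $\partial_i:=\eta_i\phi_i$ assembles into a point of $comhom_{\mathbf{d},\mathbf{r}}^{\Lambda}$ mapping to $(H_i)$.

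The crucial step, and the step I expect to be the main obstacle, is to realize each fixed $M_i$ as the kernel of $\phi_{i-1}$. I would first show that $ker(\phi_{i-1})$ is projective: from the short exact sequence $0\rightarrow ker(\gamma_{i-1})\rightarrow M_{i-1}\rightarrow H_{i-1}\rightarrow 0$ together with the assumption that $\Lambda$ has global dimension at most $2$ one obtains $pd(ker(\gamma_{i-1}))\le 1$, and then the sequence $0\rightarrow ker(\phi_{i-1})\rightarrow P^{\mathbf{d}_{i-1}}\rightarrow ker(\gamma_{i-1})\rightarrow 0$ forces $pd(ker(\phi_{i-1}))\le 0$, i.e. $ker(\phi_{i-1})$ is projective. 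Its dimension vector is $\Theta\mathbf{d}_{i-1}-\mathbf{r}_{i-1}=\mathbf{k}_i$. Since $\Lambda$ has finite global dimension, the Cartan matrix $\Theta$ is invertible, so a projective module is determined up to isomorphism by its dimension vector; hence $ker(\phi_{i-1})\cong M_i$. I then take $\eta_i$ to be such an isomorphism composed with the inclusion $ker(\phi_{i-1})\hookrightarrow P^{\mathbf{d}_{i-1}}$, a monomorphism with $im(\eta_i)=ker(\phi_{i-1})$.

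Finally I would verify that $Z=(\partial_i,\eta_i,\phi_i,\gamma_i,H_i)_{i\in\mathbb{Z}}$ satisfies every defining condition. The identities $\partial_i=\eta_i\phi_i$, $\gamma_i\phi_i=0$, $rank(\phi_i)=\mathbf{r}_i$, the injectivity of $\eta_i$ and the surjectivity of $\gamma_i$ are built into the construction. The relation $im(\eta_i)=ker(\phi_{i-1})$ yields $\phi_{i-1}\eta_i=0$, hence $\partial_{i-1}\eta_i=\eta_{i-1}\phi_{i-1}\eta_i=0$, and likewise $\phi_i\eta_{i+1}=0$, hence $\partial_i\partial_{i+1}=\eta_i(\phi_i\eta_{i+1})\phi_{i+1}=0$. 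Using $ker(\partial_{i-1})=ker(\phi_{i-1})=im(\eta_i)$ and $im(\partial_i)=\eta_i(im(\phi_i))$, the homology in degree $i-1$ is isomorphic, via $\eta_i$ and then $\gamma_i$, to $M_i/ker(\gamma_i)\cong H_i$. Thus $Z\in comhom_{\mathbf{d},\mathbf{r}}^{\Lambda}$ with $\rho(Z)=(H_i)$, which gives $rep_{\mathbf{h},\mathbf{r}}^{\Lambda}\subseteq im(\rho)$ and completes the proof.
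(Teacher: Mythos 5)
Your proof is correct and follows essentially the same route as the paper's: the forward inclusion is immediate from the definition of $comhom_{\mathbf{d},\mathbf{r}}^{\Lambda}$, and the reverse inclusion is obtained by building an explicit preimage from a projective resolution $0\rightarrow M_{i+1}\rightarrow P^{\mathbf{d}_i}\rightarrow M_i\rightarrow H_i\rightarrow 0$. The paper simply asserts the existence of such a resolution with the fixed modules $M_{i+1}$, whereas you spell out the justification (projectivity of the syzygy via $\operatorname{gl.dim}\Lambda\le 2$ and the invertibility of the Cartan matrix identifying it with $M_{i+1}$) --- a worthwhile elaboration, not a different argument.
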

\begin{proof}
That $im(\rho)\subseteq rep_{\mathbf{h},\mathbf{r}}^{\Lambda }$
follows by the construction of $comhom_{{\mathbf{d}},{\mathbf{r}}}^{\Lambda}$.
For the converse let, $(H_i)_{i\in \mathbb{Z}}\in
rep_{\mathbf{h},\mathbf{r}}^{\Lambda }$, and let
$$0\rightarrow M_{i+1} \stackrel{\eta_{i+1}} {\longrightarrow}P^{\mathbf{d}_i}
\stackrel{\phi_i}{\longrightarrow} M_i \stackrel{\gamma_i}{\longrightarrow} H_i \rightarrow 0$$
be a projective resolution of $H_i$, for all $i\in \mathbb{Z}$. The tuple
$(\eta_i\phi_i,\phi_i,\eta_i,\gamma_i,H_i)_{i\in \mathbb{Z}}$ maps onto $(H_i)_{i\in \mathbb{Z}}$, and so $im(\rho)\supseteq rep_{\mathbf{h},\mathbf{r}}^{\Lambda }$.
\end{proof}

\begin{remark}
It is possible to define a variety $comhom_{\mathbf{d},\mathbf{r}}^{\Lambda}$, and  maps $\pi$ and $\rho$, for any algebra $\Lambda$, by considering tuples
$\left( \partial _{i},\eta _{i},\phi _{i},\gamma
_{i},M_i,H_{i}\right) _{i\in \mathbb{Z}}$, where $M_i$ is the kernel
of $\partial_{i-1}$.
\end{remark}

\section{Proof of the theorem}

The aim of this section is to prove the main theorem stated in
the introduction. That is, that the two morphisms
$\pi$ and $\rho$, defined in the previous section, are smooth
with irreducible rational fibres.
We do so by showing that they are compositions of isomorphisms, principal
$G$-bundles, open immersions, and vector bundles.

Recall the following
fact from elementary linear algebra, which has a consequence that
right and left inverses of matrices of full rank induce
morphisms of varieties. Let $Mat_{i\times j}$ denote
the vector space of $i\times j$ matrices.

\begin{lemma}
\label{InversMatriks}Let ${U}\in Mat_{i\times j}\left( \Bbbk
\right) $ and ${W}\in Mat_{j\times i}\left( \Bbbk \right) $
with $rank\left( {U}\right) =rank\left( {W}\right) =j\leq i$.
Then, the left inverse of the matrix ${U}$ is
\begin{equation*}
{U}^{-1}=\left( {U}^{T}{U}\right) ^{-1}{U}^{T}=%
\frac{adj\left( {U}^{T}{U}\right) \cdot {U}^{T}}{%
det\left( {U}^{T}{U}\right) }
\end{equation*}%
and the right inverse of the matrix ${W}$ is
\begin{equation*}
{W}^{-1}={W}^{T}\left( {WW}^{T}\right) ^{-1}=\frac{%
{W}^{T}\cdot adj\left( {WW}^{T}\right) }{det\left( {WW}%
^{T}\right) }
\end{equation*}
where ${U}^{T}$ and $adj\left( {U}\right) $ denote the
transpose and adjoint of the matrix ${U}$, respectively.
\end{lemma}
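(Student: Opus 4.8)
The plan is to verify directly that the two stated expressions are genuine one-sided inverses, reducing the whole statement to the invertibility of the square matrices $U^TU$ and $WW^T$, and then to read off the adjugate forms for free from the classical cofactor identity. First I would treat $U$. As $U$ has full column rank $j\leq i$, the relevant object is the $j\times j$ matrix $U^TU$; once it is known to lie in $Gl_j(\Bbbk)$, setting $L=(U^TU)^{-1}U^T$ gives $LU=(U^TU)^{-1}(U^TU)=I_j$, so $L$ is a left inverse of $U$. The second expression for $L$ is then just the substitution of $(U^TU)^{-1}=adj(U^TU)/det(U^TU)$, valid for every invertible square matrix, and needs no separate argument.

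The case of $W$ reduces to the previous one by transposition: $W^T$ is an $i\times j$ matrix of full column rank $j$, so the $U$-case applied to $W^T$ shows that $WW^T=(W^T)^T(W^T)$ is invertible, and $R=W^T(WW^T)^{-1}$ satisfies $WR=(WW^T)(WW^T)^{-1}=I_j$, a right inverse; its adjugate form follows exactly as above. Throughout, the point that matters for the intended application is that every entry of $adj(U^TU)$, $adj(WW^T)$ and of the two determinants is a polynomial in the entries of $U$, respectively $W$, so that $L$ and $R$ are given by rational expressions and hence define morphisms of varieties on the locus where the denominators do not vanish.

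The one substantive step, and the step I expect to be the main obstacle, is the claim that full column rank forces $U^TU$ to be invertible. The inclusion $ker(U)\subseteq ker(U^TU)$ is trivial, so everything rests on the reverse inclusion $ker(U^TU)\subseteq ker(U)$, which yields $rank(U^TU)=rank(U)=j$ and hence invertibility. Over an ordered field this is the standard positive-definiteness argument: $x^TU^TUx=|Ux|^2\geq 0$ with equality only when $Ux=0$, and full column rank makes $U$ injective, forcing $x=0$. Over an arbitrary algebraically closed field I expect genuine trouble, since the symmetric form $(x,y)\mapsto x^Ty$ may be isotropic and $U^TU$ can be singular even when $U$ has full column rank, e.g. a column vector $U$ with $U^TU=0$ once $\Bbbk$ contains a square root of $-1$. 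The honest plan is then either to add a hypothesis guaranteeing invertibility, or, since the lemma is only needed to make the one-sided inverses depend morphically on the entries, to replace $U^TU$ by a nonsingular $j\times j$ minor of $U$ and invert that instead, which realizes the same left inverse as a rational map on the locus of full-rank matrices and suffices for every later use.
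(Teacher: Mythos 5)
The paper offers no proof of this lemma at all: it is introduced with ``recall the following fact from elementary linear algebra'' and then used as a black box in Lemmas \ref{PhiSatu} and \ref{PhiTiga}, so there is no argument of the authors' to compare yours against. Your verification that $(U^TU)^{-1}U^T$ is a left inverse and $W^T(WW^T)^{-1}$ a right inverse, granted invertibility of $U^TU$ and $WW^T$, is exactly right, as is the reduction of the $W$-case to the $U$-case by transposition. The substantive objection you raise is also correct, and it exposes a genuine error in the lemma as stated: over the algebraically closed field $\Bbbk$ fixed at the start of the paper, full column rank does not force $U^TU$ to be invertible. For $U=(1,\sqrt{-1})^T$ one has $rank(U)=1$ but $U^TU=0$ (and in characteristic $2$ already $U=(1,1)^T$ does the job). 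The positive-definiteness argument is available only over ordered fields, so the displayed formulas fail to define one-sided inverses on all of the full-rank locus, and no proof of the lemma as literally stated can exist.

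Your proposed repair is the standard one and is all the paper actually needs. What Lemmas \ref{PhiSatu} and \ref{PhiTiga} require is that a left inverse of a monomorphism (resp.\ a right inverse of an epimorphism) can be chosen so as to depend on the matrix entries as a morphism of varieties, at least locally: inverting a nonsingular $j\times j$ minor of $U$ gives such a rational left inverse on the open set where that minor does not vanish, and these open sets cover the locus of full-rank matrices. The outputs built from these one-sided inverses do not depend on the choice made --- $\phi_i=\eta_i^{-1}\partial_i$ is the unique map with $\eta_i\phi_i=\partial_i$ because $\eta_i$ is injective, and $H_i$ is the unique representation making $\gamma_i$ a $\Lambda$-epimorphism by Lemma \ref{comhom} --- so the locally defined inverse morphisms glue to the global inverses of $\pi_1$ and $\pi_3$ that the paper wants. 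In short, you have not proved the lemma because the lemma is false over $\Bbbk$; you have correctly located the failure, produced the counterexample, and supplied the fix that preserves every later application.
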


We will need the following result on homomorphism of vector bundles. We
refer to \cite{P97} for details.

\begin{proposition}
\label{VBundle}Let $f:E\rightarrow F$ be a map of vector bundles over $X$.
Suppose that the rank of $f_{x}$ remain constant as $x$ varies over $X$.
Then $ker\left( f\right) $ and $im\left( f\right) $ are sub-bundles of $E$
and $F$, respectively.
\end{proposition}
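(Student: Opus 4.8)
The plan is to prove this locally on $X$, since being a sub-bundle is a local condition: a subset $S\subseteq E$ is a sub-bundle precisely when, near every point, there is a local trivialization of $E$ identifying $S$ with a fixed coordinate subspace. So I would fix a point $x_{0}\in X$ and choose trivializations of $E$ and $F$ over an open neighborhood $U$ of $x_{0}$. With respect to these, $f$ is given by a matrix-valued regular function on $U$, and I write $r$ for the common value of $rank\left( f_{x}\right) $.

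Next I would choose the bases in the trivializations so that the top-left $r\times r$ block $A$ of $f_{x_{0}}$ is invertible, which is possible because $f_{x_{0}}$ has rank $r$. Since $det\left( A\right) $ is a regular function not vanishing at $x_{0}$, it is nonzero on a smaller neighborhood $V\subseteq U$, and by Lemma \ref{InversMatriks} (equivalently, Cramer's rule) the entries of $A^{-1}$ are regular functions on $V$. Writing
\begin{equation*}
f=\begin{pmatrix} A & B \\ C & D\end{pmatrix}
\end{equation*}
in block form, the block elementary operations given by right multiplication by $\left(\begin{smallmatrix} I & -A^{-1}B \\ 0 & I\end{smallmatrix}\right)$ and left multiplication by $\left(\begin{smallmatrix} I & 0 \\ -CA^{-1} & I\end{smallmatrix}\right)$ are regular changes of trivialization $g$ of $E|_{V}$ and $h$ of $F|_{V}$, and they bring $f$ to the form $\left(\begin{smallmatrix} A & 0 \\ 0 & D-CA^{-1}B\end{smallmatrix}\right)$.

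Here the constant-rank hypothesis does the essential work: because $g$ and $h$ are invertible at every point of $V$, the rank of $hfg^{-1}$ equals $r$ throughout $V$, while the block form shows this rank is $r+rank\left( D-CA^{-1}B\right) $. Hence $D-CA^{-1}B$ vanishes identically on $V$, and after one further regular change absorbing $A^{-1}$ the morphism $f$ becomes the constant projection $\left(\begin{smallmatrix} I_{r} & 0 \\ 0 & 0\end{smallmatrix}\right)$ in the new trivializations. In this normal form $ker\left( f\right) $ and $im\left( f\right) $ are spanned by fixed subsets of the coordinate sections, hence are trivial sub-bundles over $V$. Since such a trivialization exists near every point of $X$, the globally defined subsets $ker\left( f\right) $ and $im\left( f\right) $ are sub-bundles of $E$ and $F$.

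The only genuine obstacle is bookkeeping rather than conceptual: one must ensure that every change of trivialization is a morphism of varieties and not merely continuous, which is exactly what the explicit inverse formula of Lemma \ref{InversMatriks} guarantees, and one must invoke constant rank at the single point where the Schur complement $D-CA^{-1}B$ is forced to vanish. Everything else is the standard reduction to the constant projection normal form.
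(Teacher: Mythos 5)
Your argument is correct, but note that the paper does not prove this proposition at all: it is quoted as a known fact with a reference to Le Potier's \emph{Lectures on vector bundles}, so there is no in-paper proof to compare against. What you have written is the standard constant-rank normal-form argument (essentially the proof one finds in the cited reference): trivialize locally, arrange an invertible $r\times r$ block $A$ at the chosen point, use the unipotent block operations to reach $\left(\begin{smallmatrix} A & 0 \\ 0 & D-CA^{-1}B\end{smallmatrix}\right)$, and invoke constant rank exactly once to kill the Schur complement. The two points that genuinely need care in the algebraic category are both handled: the entries of $A^{-1}$ are regular where $\det(A)\neq 0$ (Cramer's rule, i.e.\ the same mechanism as Lemma \ref{InversMatriks}), so all changes of trivialization are morphisms of varieties with regular inverses; and the rank count $r+rank(D-CA^{-1}B)$ is valid pointwise on $V$ because $A$ is invertible there. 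With $f$ in the form $\left(\begin{smallmatrix} I_{r} & 0 \\ 0 & 0\end{smallmatrix}\right)$, the kernel and image are coordinate sub-bundles over $V$, and since these local descriptions are descriptions of the globally defined subsets $ker(f)$ and $im(f)$, the conclusion follows. Your proof is a legitimate self-contained replacement for the citation, which is arguably a small improvement in a paper that leans on this proposition twice (Lemmas \ref{PhiEmpat} and \ref{RhoDua}--\ref{RhoEmpat}).
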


\subsection{The smooth morphism from $comhom_{\mathbf{d},\mathbf{r}%
}^{\Lambda }$ to $comproj_{\mathbf{d},\mathbf{r}}^{\Lambda }$}

The aim of this subsection is to prove the following lemma, which is the first half
of the theorem.

\begin{lemma}
\label{phi}The morphism
$\pi :comhom_{\mathbf{d},\mathbf{r}}^{\Lambda }\rightarrow comproj_{\mathbf{d}
,\mathbf{r}}^{\Lambda }$
given by
\begin{equation*}
\pi \left( \partial _{i},\eta _{i},\phi _{i},\gamma _{i},H_{i}\right)
_{i\in \mathbb{Z}}=(\partial _{i})_{i\in \mathbb{Z}}
\end{equation*}%
is smooth with irreducible rational fibres.
\end{lemma}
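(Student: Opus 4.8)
The plan is to analyze the fibre of $\pi$ over a fixed complex $X=(\partial_i)_{i\in\mathbb{Z}}\in comproj_{\mathbf{d},\mathbf{r}}^{\Lambda}$ and show it is an irreducible rational variety, and then to upgrade this fibrewise statement to smoothness of $\pi$ by exhibiting $\pi$ as a composition of structurally simple morphisms (open immersions, vector bundles, and principal $G$-bundles), as the section's opening paragraph advertises. First I would describe the fibre set-theoretically: a point in $\pi^{-1}(X)$ is a tuple $(\partial_i,\eta_i,\phi_i,\gamma_i,H_i)_{i\in\mathbb{Z}}$ satisfying $\partial_i=\eta_i\phi_i$, $\partial_{i-1}\eta_i=0$, $\gamma_i\phi_i=0$, with $\eta_i$ a monomorphism, $\gamma_i$ an epimorphism, and $\mathrm{rank}(\phi_i)=\mathbf{r}_i$. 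Since $X$ is fixed, the data $\eta_i,\phi_i,\gamma_i,H_i$ are the genuine variables. By Lemma \ref{comhom}, once $\eta_i$ identifies $M_i$ with $\ker(\partial_{i-1})$ inside $P^{\mathbf{d}_{i-1}}$, the map $\phi_i$ is forced to be the induced factorization of $\partial_i$ through this kernel, and $\gamma_i$ together with $H_i$ is determined up to the choice of the graded linear isomorphism used to identify $\mathrm{coker}(\phi_i)$ with $\Bbbk^{\mathbf{h}_i}$. Thus the fibre is essentially a product over $i$ of the space of admissible monomorphisms $\eta_i$ times the space of admissible epimorphisms $\gamma_i$.

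The key structural observation is that each of these parameter spaces is a homogeneous space or an open subset of an affine space, hence irreducible and rational. The monomorphisms $\eta_i\colon M_i\hookrightarrow P^{\mathbf{d}_{i-1}}$ with image $\ker(\partial_{i-1})$ form a torsor under $\mathrm{Aut}_\Lambda M_i$: any two such monomorphisms differ by a unique automorphism of $M_i$, since both identify $M_i$ with the same submodule. Similarly the epimorphisms $\gamma_i\colon M_i\twoheadrightarrow H_i$ with prescribed kernel $\mathrm{im}(\phi_i)$ form a torsor under $Gl_{\mathbf{h}_i}$. Since $\mathrm{Aut}_\Lambda M_i$ is an open subvariety of the affine space $\mathrm{End}_\Lambda M_i$ (it is the complement of a determinantal locus, hence irreducible and rational) and $Gl_{\mathbf{h}_i}$ is likewise connected, irreducible and rational, the whole fibre is a product of irreducible rational varieties and is therefore irreducible and rational. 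This handles the ``irreducible rational fibres'' half of the statement.

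To obtain smoothness of $\pi$ globally, rather than merely fibrewise, I would build $\pi$ as a composition. The plan is to first forget the representation datum $H_i$ and the splitting of $\gamma_i$: the assignment that remembers only the submodule $\ker(\partial_{i-1})=\mathrm{im}(\eta_i)$ and the quotient realizes the relevant maps as principal bundles under $\prod_i \mathrm{Aut}_\Lambda M_i$ and $\prod_i Gl_{\mathbf{h}_i}$. Here Lemma \ref{InversMatriks} is the technical engine: because $\eta_i$ has full column rank and $\gamma_i$ has full row rank, the one-sided inverses $(\eta_i^T\eta_i)^{-1}\eta_i^T$ and $\gamma_i^T(\gamma_i\gamma_i^T)^{-1}$ are given by rational formulas in the matrix entries, so local sections and trivializations are morphisms of varieties. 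Proposition \ref{VBundle} then guarantees that, over the locus where ranks are constant (which is exactly $comproj_{\mathbf{d},\mathbf{r}}^{\Lambda}$ by the rank condition $\mathbf{r}$), the kernels $\ker(\partial_{i-1})$ vary as a sub-bundle, so the family of identifications $M_i\cong\ker(\partial_{i-1})$ is an honest vector-bundle-theoretic datum rather than a pointwise one. Composing these principal-bundle and vector-bundle projections yields $\pi$, and since each factor is smooth with irreducible rational fibres, so is $\pi$.

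The main obstacle I anticipate is the globalization, not the fibre computation. Pointwise the fibre is transparently a torsor, but to conclude that $\pi$ itself is a principal bundle (and hence smooth, not just with smooth fibres) I must verify that the group actions of $\prod_i\mathrm{Aut}_\Lambda M_i$ and $\prod_i Gl_{\mathbf{h}_i}$ on $comhom_{\mathbf{d},\mathbf{r}}^{\Lambda}$ are free and that local sections exist in the Zariski topology. Freeness follows from the uniqueness in Lemma \ref{comhom} (the automorphism identifying two choices is unique), and local triviality is where Lemma \ref{InversMatriks} and Proposition \ref{VBundle} must be combined carefully: one needs the constant-rank hypothesis to hold uniformly so that $\ker(\partial_{i-1})$ and $\mathrm{im}(\phi_i)$ are sub-bundles of the trivial bundles $P^{\mathbf{d}_{i-1}}$ and $M_i$, after which the rational inverse formulas produce the splitting maps needed for a section. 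I would organize the argument as a chain of named intermediate spaces so that each arrow in the chain is manifestly one of the four allowed elementary morphisms.
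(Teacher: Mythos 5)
Your proposal is correct and follows essentially the same route as the paper: the paper factors $\pi$ through intermediate spaces $X_1,\dots,X_4$ by forgetting $H_i$, $\gamma_i$, $\phi_i$, $\eta_i$ in turn, identifying these steps as an isomorphism, a locally trivial $Gl_{\mathbf{h}}$-bundle, an isomorphism (via the left inverse $\eta_i^{-1}$ from Lemma \ref{InversMatriks}), and an open immersion composed with a vector bundle (in fact an $Aut_\Lambda M$-bundle, via Proposition \ref{VBundle}), exactly the chain of named intermediate spaces, torsor identifications, and technical tools you describe. The only cosmetic difference is that the paper establishes the $\eta$-forgetting step first as open immersion plus vector bundle and only afterwards upgrades it to an $Aut_\Lambda M$-bundle in a corollary.
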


We prove the lemma by decomposing $\pi $ into smooth morphisms
\begin{equation*}
\pi =\pi _{4}\circ \pi _{3}\circ \pi _{2}\circ \pi _{1}
\end{equation*}%
where $\pi _{i}:X_{i-1}\rightarrow X_{i}$ are projection maps and $X_{i}$ are defined as
follows. Let
\begin{equation*}
\begin{array}{rl}
X_{0} & :=comhom_{\mathbf{d},\mathbf{r}}^{\Lambda }, \\
X_{1} & \text{is the set of tuples }\left( \partial _{i},\eta _{i},\phi
_{i},\gamma _{i}\right) _{i\in \mathbb{Z}}\text{ obtained by projecting }%
X_{0}\text{ via }\pi_1,\\
X_{2} & \text{is the set of tuples }\left( \partial _{i},\eta _{i},\phi
_{i}\right) _{i\in \mathbb{Z}}\text{ obtained by projecting }X_{1}\text{ via }\pi_2, \\
X_{3} & \text{is the set of tuples }\left( \partial _{i},\eta _{i}\right)
_{i\in \mathbb{Z}}\text{ obtained by projecting }X_{2}\text{ via }\pi_3, \\
X_{4} & :=comproj_{\mathbf{d},\mathbf{r}}^{\Lambda },%
\end{array}%
\end{equation*}%
and the maps $\pi _{i}$ are restrictions of the projection maps.

\begin{lemma}
\label{PhiSatu}The map $\pi _{1}:comhom_{\mathbf{d},\mathbf{r}}^{\Lambda
}\rightarrow X_{1}$, given by
\begin{equation*}
\pi _{1}(\left( \partial _{i},\eta _{i},\phi _{i},\gamma _{i},H_{i}\right)
_{i\in \mathbb{Z}})=\left( \partial _{i},\eta _{i},\phi _{i},\gamma
_{i}\right) _{i\in \mathbb{Z}}
\end{equation*}%
is an isomorphism of varieties.
\end{lemma}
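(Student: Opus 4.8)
The plan is to exhibit an explicit inverse morphism to $\pi_1$ and check that both $\pi_1$ and its inverse are regular maps, so that the two are mutually inverse isomorphisms of varieties. The map $\pi_1$ simply forgets the last coordinate $H_i$, so the content is that $H_i$ is recovered regularly from the remaining data $\left(\partial_i,\eta_i,\phi_i,\gamma_i\right)_{i\in\mathbb{Z}}$. First I would observe that on $X_1$ the representation $H_i$ is no longer free data: by Lemma \ref{comhom}, for each point of $comhom_{\mathbf{d},\mathbf{r}}^\Lambda$ the representation $H_i$ is the unique point of $rep_{\mathbf{h}_i}^\Lambda$ for which $\gamma_i:M_i\to H_i$ is a $\Lambda$-epimorphism. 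Thus set-theoretically $\pi_1$ is a bijection onto $X_1$, and the real task is to verify that the reconstruction of $H_i$ from $\gamma_i$ is given by polynomial (indeed rational-but-everywhere-regular) formulas.

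The key step is the explicit formula for the structure maps of $H_i$ already appearing in the proof of Lemma \ref{comhom}: for each arrow $\alpha:a\to b$ one has
\begin{equation*}
\left(H_i\right)_\alpha=\left(\gamma_i\right)_b\circ\left(M_i\right)_\alpha\circ\left(\gamma_i\right)_a^{-1},
\end{equation*}
where $\left(\gamma_i\right)_a^{-1}$ is a right inverse of the surjection $\left(\gamma_i\right)_a$. I would then invoke Lemma \ref{InversMatriks}: since $\gamma_i$ is an epimorphism, each component $\left(\gamma_i\right)_a$ is a matrix of full rank, and the lemma gives its right inverse as $W^T\left(WW^T\right)^{-1}$, a ratio of an adjugate-valued polynomial by the nowhere-vanishing determinant $\det\left(WW^T\right)$. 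Hence $\left(\gamma_i\right)_a^{-1}$ depends regularly on the entries of $\gamma_i$ on the locus where $\gamma_i$ is surjective, which is exactly $X_1$. Substituting into the displayed formula shows that each $\left(H_i\right)_\alpha$ is a regular function of $\left(\partial_i,\eta_i,\phi_i,\gamma_i\right)_{i\in\mathbb{Z}}$, so the assignment
\begin{equation*}
\sigma:\left(\partial_i,\eta_i,\phi_i,\gamma_i\right)_{i\in\mathbb{Z}}\mapsto\left(\partial_i,\eta_i,\phi_i,\gamma_i,H_i\right)_{i\in\mathbb{Z}}
\end{equation*}
is a morphism $X_1\to comhom_{\mathbf{d},\mathbf{r}}^\Lambda$.

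It then remains to confirm that $\sigma$ genuinely lands in $comhom_{\mathbf{d},\mathbf{r}}^\Lambda$, i.e. that the tuple it produces satisfies all the defining conditions and in particular that $\gamma_i:M_i\to H_i$ really is a $\Lambda$-homomorphism for the reconstructed structure; but this is immediate from the way the formula was derived, since $\left(H_i\right)_\alpha\circ\left(\gamma_i\right)_a=\left(\gamma_i\right)_b\circ\left(M_i\right)_\alpha$ holds by construction once one multiplies the formula on the right by $\left(\gamma_i\right)_a$ and uses that $\left(\gamma_i\right)_a^{-1}$ is a right inverse. Finally $\pi_1\circ\sigma=\mathrm{id}_{X_1}$ trivially, and $\sigma\circ\pi_1=\mathrm{id}$ follows from the uniqueness clause in Lemma \ref{comhom}, which guarantees that the $H_i$ recovered by $\sigma$ coincides with the original one. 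I expect the only genuine subtlety to be bookkeeping: checking that $X_1$ is precisely the locus on which the determinants $\det\left(\left(\gamma_i\right)_a\left(\gamma_i\right)_a^T\right)$ are invertible so that $\sigma$ is regular everywhere on $X_1$, and verifying the open/locally-closed condition defining $X_1$ is compatible with this; the algebraic heart of the argument is entirely supplied by Lemmas \ref{comhom} and \ref{InversMatriks}.
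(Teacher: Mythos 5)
Your proposal is correct and follows essentially the same route as the paper: the paper's proof also constructs the inverse of $\pi_1$ by reconstructing $H_i$ via the formula $\left(H_i\right)_\alpha=\left(\gamma_i\right)_b\circ\left(M_i\right)_\alpha\circ\left(\gamma_i\right)_a^{-1}$, invoking Lemma~\ref{InversMatriks} for the regularity of the right inverse and Lemma~\ref{comhom} for the uniqueness of $H_i$. Your additional remarks on well-definedness and on $\pi_1\circ\sigma$, $\sigma\circ\pi_1$ being identities are consistent with, and slightly more explicit than, what the paper records.
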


\begin{proof}
We prove the lemma by constructing an inverse of $\pi _{1}$. After fixing
bases we may view all maps and representations as tuples of matrices. Let
$\left( \partial _{i},\eta _{i},\phi _{i},\gamma _{i}\right) _{i\in
\mathbb{Z}}\in X_{1}$. Since $\gamma _{i}:M_{i}\rightarrow \Bbbk ^{\mathbf{h}%
_{i}}$ is an epimorphism, it has a right inverse $\gamma
_{i}^{-1}=\left( (\gamma _{i})_{a}^{-1}\right) _{a\in Q_{0}}$  by Lemma %
\ref{InversMatriks}. By Lemma \ref{comhom}, we construct a representation
$H_{i}\ $ with $(H_{i})_{\alpha }=(\gamma _{i})_{b}(M_{i})_{\alpha }(\gamma
_{i})_{a}^{-1}$ for any arrow $\alpha:a\rightarrow b$ in $Q_1$. Thus, $H_{i}$ is the unique representation in $rep_{\mathbf{%
h}_{i}}^{\Lambda}$ which makes $\gamma _{i}:M_{i}\rightarrow H_{i}$ is a $\Lambda$-epimorphism. Then, for any $i\in\mathbb{Z}$,
$H_i$ admits a presentation $P^{\mathbf{d}_i}\rightarrow M_i\rightarrow H_i\rightarrow 0$.
Hence, $\left( H_{i}\right) _{i\in \mathbb{Z}}\in rep^{\Lambda}_{\mathbf{h},\mathbf{r}}$. So we have a morphism
\begin{equation*}
\left( \partial _{i},\eta _{i},\phi _{i},\gamma _{i}\right) _{i\in \mathbb{Z}%
}\mapsto \left( \partial _{i},\eta _{i},\phi _{i},\gamma _{i},\left( (\gamma
_{i})_{b}(M_{i})_{\alpha }(\gamma _{i})_{a}^{-1}\right) _{\alpha
:a\rightarrow b}\right) _{i\in \mathbb{Z}}
\end{equation*}%
which is the inverse of $\pi _{1}$. This completes the proof of the lemma.
\end{proof}

The proof of the following lemma is similar to the proof of Lemma 17 in \cite%
{JS05}, and so we skip the details.

\begin{lemma}
\label{PhiDua}The map $\pi _{2}:X_{1}\rightarrow X_{2}$, given by
\begin{equation*}
\pi_2( \left( \partial _{i} , \eta _{i} ,\phi _{i} ,\gamma _{i} \right)
_{i\in \mathbb{Z}})= \left( \partial _{i}, \eta _{i} ,\phi _{i}\right)
_{i\in \mathbb{Z}}
\end{equation*}
is a locally trivial $Gl_{\mathbf{h}}$-bundle.
\end{lemma}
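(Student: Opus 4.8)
The plan is to recognize $\pi_2$ as the (graded) frame bundle of a quotient vector bundle built over $X_2$, for which local triviality is automatic. First I would make the $Gl_{\mathbf{h}}$-action explicit: $Gl_{\mathbf{h}}=\prod_{i}Gl_{\mathbf{h}_i}$ acts on $X_1$ by $g\cdot(\partial_i,\eta_i,\phi_i,\gamma_i)_i=(\partial_i,\eta_i,\phi_i,g_i\gamma_i)_i$, leaving the first three coordinates untouched and post-composing on each $\gamma_i$. As every $\gamma_i$ is an epimorphism this action is free, and $\pi_2$ is $Gl_{\mathbf{h}}$-invariant by construction. The essential observation is that on $X_1$ the relation $\gamma_i\phi_i=0$, together with the surjectivity of $\gamma_i$ and the numerical identity $\mathbf{h}_i=\mathbf{k}_i-\mathbf{r}_i$, forces $ker(\gamma_i)=im(\phi_i)$. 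Hence a point in the fibre over $(\partial_i,\eta_i,\phi_i)_i$ is exactly a family of graded-linear isomorphisms $M_i/im(\phi_i)\stackrel{\sim}{\to}\Bbbk^{\mathbf{h}_i}$, and any two such families differ by a unique element of $Gl_{\mathbf{h}}$; so the fibres coincide with the $Gl_{\mathbf{h}}$-orbits and each is a $Gl_{\mathbf{h}}$-torsor.

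Next I would manufacture the bundle. Over $X_2$ the map $\phi_i$ has constant rank $\mathbf{r}_i$, so Proposition \ref{VBundle}, applied to the homomorphism of trivial bundles $X_2\times P^{\mathbf{d}_i}\to X_2\times M_i$ that is fibrewise $\phi_i$, shows that the image $\mathcal{I}_i:=im(\phi_i)$ is a sub-bundle of $X_2\times M_i$. Therefore the quotient $\mathcal{Q}_i:=(X_2\times M_i)/\mathcal{I}_i$ is a $Q_0$-graded vector bundle over $X_2$ of rank vector $\mathbf{h}_i$, with fibre $M_i/im(\phi_i)$ at each point. Writing $\mathcal{Q}=\prod_i\mathcal{Q}_i$ for the fibre product over $X_2$, the previous paragraph identifies $X_1$ $Gl_{\mathbf{h}}$-equivariantly with the bundle of graded-linear trivializations $\mathcal{Q}_x\stackrel{\sim}{\to}\Bbbk^{\mathbf{h}}$ of $\mathcal{Q}$.

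Finally I would deduce local triviality. A graded vector bundle is Zariski-locally trivial component by component, so over a suitable affine open $U\subseteq X_2$ there is a graded trivialization $\tau\colon\mathcal{Q}|_U\stackrel{\sim}{\to}U\times\Bbbk^{\mathbf{h}}$; then $(x,g)\mapsto g\circ\tau_x$ defines a $Gl_{\mathbf{h}}$-equivariant isomorphism $U\times Gl_{\mathbf{h}}\stackrel{\sim}{\to}\pi_2^{-1}(U)$ over $U$. Covering $X_2$ by such opens exhibits $\pi_2$ as a Zariski-locally trivial principal $Gl_{\mathbf{h}}$-bundle. Equivalently, once $\pi_2$ is known to be a $Gl_{\mathbf{h}}$-torsor, Zariski-local triviality is automatic because $Gl_{\mathbf{h}}$ is a special group; compare \cite{P97}.

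I expect the main obstacle to be the clean verification that the fibre is \emph{exactly} this torsor of co-frames, with no hidden $\Lambda$-linearity condition. The point is that $ker(\gamma_i)=im(\phi_i)$ is a $\Lambda$-submodule of $M_i$ since it is the image of the $\Lambda$-map $\phi_i$; consequently every graded-linear epimorphism $\gamma_i$ killing $im(\phi_i)$ really does lie in the fibre, its induced $\Lambda$-structure $H_i$ being recovered as in Lemma \ref{PhiSatu}. Confirming this, and checking that the constant-rank hypothesis of Proposition \ref{VBundle} holds throughout $X_2$, are the only delicate points; the surrounding bundle theory is then routine.
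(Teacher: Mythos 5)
Your argument is correct, and it supplies details that the paper itself omits: the authors give no proof of this lemma, deferring to the analogous Lemma 17 of \cite{JS05}, and your construction is exactly the intended one. The two points you flag as delicate are indeed the essential ones, and you handle both correctly: the dimension count $\mathbf{h}_i=\mathbf{k}_i-\mathbf{r}_i$ upgrades $\gamma_i\phi_i=0$ to $ker(\gamma_i)=im(\phi_i)$, so the fibre of $\pi_2$ is precisely the torsor of graded co-frames of the quotient bundle $(X_2\times M_i)/im(\phi_i)$, whose local triviality (via Proposition \ref{VBundle} applied vertex-wise, using the constant rank $\mathbf{r}_i$) gives the local trivializations $(x,g)\mapsto g\circ\tau_x$ of $\pi_2$.
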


Any locally trivial $Gl_{\mathbf{h}}$-bundle is a smooth morphism, and so $\pi_2$ is
smooth.

\begin{lemma}
\label{PhiTiga}The map $\pi _{3}:X_{2}\rightarrow X_{3}$, given by
\begin{equation*}
\pi_3( \left( \partial _{i} , \eta _{i} , \phi _{i}\right) _{i\in \mathbb{Z}%
})= \left( \partial _{i}, \eta _{i} \right) _{i\in \mathbb{Z}}
\end{equation*}
is an isomorphism of varieties.
\end{lemma}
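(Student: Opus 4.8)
The plan is to show that $\pi_3$ is an isomorphism by exhibiting an explicit inverse morphism, exactly in the spirit of the proof of Lemma \ref{PhiSatu}. The map $\pi_3$ simply forgets the homomorphism $\phi_i$, so the content of the lemma is that $\phi_i$ is uniquely and algebraically recoverable from the data $(\partial_i,\eta_i)_{i\in\mathbb{Z}}$. First I would recall that on $X_2$ the defining conditions force $\partial_i=\eta_i\phi_i$ with $\eta_i\colon M_i\to P^{\mathbf{d}_{i-1}}$ a $\Lambda$-monomorphism. Since $\eta_i$ is injective with $\operatorname{rank}(\eta_i)=\mathbf{k}_i=\dim M_i$ (that is, $\eta_i$ has full column rank componentwise), Lemma \ref{InversMatriks} applies after fixing bases and produces a left inverse $\eta_i^{-1}$ whose entries are rational functions in the entries of $\eta_i$ with nonvanishing denominator $\det(\eta_i^T\eta_i)$ on $X_2$.

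The key step is then the formula $\phi_i=\eta_i^{-1}\partial_i$. I would verify that this recovers $\phi_i$: from $\partial_i=\eta_i\phi_i$ we get $\eta_i^{-1}\partial_i=\eta_i^{-1}\eta_i\phi_i=\phi_i$, using that $\eta_i^{-1}$ is a genuine left inverse. This shows uniqueness, and simultaneously shows that the assignment
\begin{equation*}
\left(\partial_i,\eta_i\right)_{i\in\mathbb{Z}}\mapsto\left(\partial_i,\eta_i,\eta_i^{-1}\partial_i\right)_{i\in\mathbb{Z}}
\end{equation*}
is a morphism of varieties, because each $\eta_i^{-1}$ is given by the rational expression from Lemma \ref{InversMatriks} which is regular on $X_2$. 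It is a two-sided inverse to $\pi_3$ at the level of the underlying tuples, since composing in one order clearly gives the identity on $X_3$, and in the other order returns the original $\phi_i$ by the computation just given.

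There is one point requiring care that I expect to be the main (though modest) obstacle: I must check that the candidate inverse actually lands in $X_2$, i.e. that the reconstructed $\phi_i=\eta_i^{-1}\partial_i$ is a genuine $\Lambda$-homomorphism of rank $\mathbf{r}_i$ and satisfies the relations cutting out $X_2$ (in particular $\partial_i=\eta_i\phi_i$ and the compatibilities with $\partial_{i+1}$). The relation $\partial_i=\eta_i\phi_i$ holds because $\eta_i(\eta_i^{-1}\partial_i)=\partial_i$: here I use that $\operatorname{im}(\partial_i)\subseteq\operatorname{im}(\eta_i)=\ker(\partial_{i-1})$ on $X_2$, so $\partial_i$ factors through $\eta_i$ and the left inverse returns the correct factor. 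That $\phi_i$ is $\Lambda$-linear follows since it equals a factor of the $\Lambda$-map $\partial_i$ through the $\Lambda$-monomorphism $\eta_i$, and its rank equals $\operatorname{rank}(\partial_i)=\mathbf{r}_i$ by Lemma \ref{comhom}. The remaining defining equations of $X_2$ involve only $\partial_i$ and $\eta_i$, which are carried over unchanged, so the morphism is well defined and $\pi_3$ is an isomorphism of varieties.
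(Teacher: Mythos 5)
Your proposal is correct and follows essentially the same route as the paper: both construct the inverse of $\pi_3$ by fixing bases, taking the left inverse $\eta_i^{-1}$ of the monomorphism $\eta_i$ via Lemma \ref{InversMatriks}, and recovering $\phi_i=\eta_i^{-1}\partial_i$. Your additional checks (that $\partial_i$ factors through $\eta_i$ because $im(\partial_i)\subseteq ker(\partial_{i-1})=im(\eta_i)$, and that the reconstructed $\phi_i$ has rank $\mathbf{r}_i$) are details the paper leaves implicit, so your write-up is if anything slightly more complete.
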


\begin{proof}
We prove that $\pi _{3}$ is an isomorphism by constructing an inverse
morphism $(\pi _{3})^{-1}$. We fix bases and assume that all maps are given
by matrices. Given any $\left( \partial _{i},\eta _{i}\right) _{i\in \mathbb{%
Z}}\in X_{3}$. Since $\eta _{i}$ is a $\Lambda$-monomorphism, it has a left
inverse $\eta _{i}^{-1}=\left( \left( \eta _{i}\right) _{a}^{-1}\right) _{a\in
Q_{0}}$. For any $i\in \mathbb{Z}$, we construct a $\Lambda $-homomorphism $\phi _{i}=\eta
_{i}^{-1}\partial _{i}$. By Lemma \ref%
{InversMatriks}, the map $(\pi _{3})^{-1}$ defined by
\begin{equation*}
(\pi _{3})^{-1}(\left( \partial _{i},\eta _{i}\right) _{i\in \mathbb{Z}%
})=\left( \partial _{i},\eta _{i},\eta _{i}^{-1}\partial _{i}\right) _{i\in
\mathbb{Z}}
\end{equation*}%
is a morphism of varieties, and it is the inverse of $\pi _{3}$, which
completes the proof of the lemma.
\end{proof}

 Finally, the smoothness of $\pi
_{4}$ follows from the following lemma and the fact that vector bundles and
open immersions are smooth.

\begin{lemma}
\label{PhiEmpat}The map $\pi_{4}:X_{3}\rightarrow comproj_{\mathbf{d},%
\mathbf{r}}^{\Lambda }$, given by
\begin{equation*}
\pi_4(\left( \partial _{i} , \eta _{i} \right) _{i\in \mathbb{Z}})=\left(
\partial _{i} \right) _{i\in \mathbb{Z}}
\end{equation*}
is the composition of an open immersion with a vector bundle with base $%
comproj_{\mathbf{d},\mathbf{r}}^{\Lambda }$.
\end{lemma}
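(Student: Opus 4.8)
The plan is to exhibit $X_{3}$ as an open subvariety of the total space of a vector bundle over $comproj_{\mathbf{d},\mathbf{r}}^{\Lambda}$, with $\pi_{4}$ being the restriction of the bundle projection. First, I fix an integer $i$ and consider, over the base $comproj_{\mathbf{d},\mathbf{r}}^{\Lambda}$, the two trivial vector bundles with fibres $Hom_{\Lambda}(M_{i},P^{\mathbf{d}_{i-1}})$ and $Hom_{\Lambda}(M_{i},P^{\mathbf{d}_{i-2}})$. The assignment $\eta\mapsto \partial_{i-1}\circ\eta$ defines a homomorphism $f_{i}$ of these vector bundles, since $\partial_{i-1}$ depends algebraically on the point $(\partial_{j})_{j}\in comproj_{\mathbf{d},\mathbf{r}}^{\Lambda}$. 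The fibre of $ker(f_{i})$ over a complex $(\partial_{j})_{j}$ is precisely $Hom_{\Lambda}(M_{i},ker(\partial_{i-1}))$, the space of those $\eta_{i}$ with $\partial_{i-1}\eta_{i}=0$.

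Next I verify that $f_{i}$ has constant rank, so that Proposition \ref{VBundle} applies. By the choice of $M_{i}$ and Lemma \ref{comhom}, we have $ker(\partial_{i-1})\cong M_{i}$ for every complex in $comproj_{\mathbf{d},\mathbf{r}}^{\Lambda}$; hence $dim_{\Bbbk}Hom_{\Lambda}(M_{i},ker(\partial_{i-1}))=dim_{\Bbbk}End_{\Lambda}(M_{i})$ is independent of the chosen point. Since $dim_{\Bbbk}Hom_{\Lambda}(M_{i},P^{\mathbf{d}_{i-1}})$ is also constant, the rank of $f_{i}$ is constant, and Proposition \ref{VBundle} shows that $ker(f_{i})$ is a sub-bundle, hence a vector bundle over $comproj_{\mathbf{d},\mathbf{r}}^{\Lambda}$. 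Taking the product over the finitely many relevant $i$ yields a vector bundle $p:\mathcal{V}\rightarrow comproj_{\mathbf{d},\mathbf{r}}^{\Lambda}$ whose fibre over $(\partial_{j})_{j}$ is the space of tuples $(\eta_{i})_{i}$ satisfying $\partial_{i-1}\eta_{i}=0$ for all $i$.

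Finally, I identify $X_{3}$ with the open subset of $\mathcal{V}$ cut out by the condition that each $\eta_{i}$ be a $\Lambda$-monomorphism. This locus is open: injectivity of $\eta_{i}$ is equivalent to $rank(\eta_{i})=\mathbf{k}_{i}$, which is the maximal value since $dim(M_{i})=\mathbf{k}_{i}=dim(ker(\partial_{i-1}))$, and the maximal-rank locus is open by lower semicontinuity of rank. Conversely, any tuple $(\partial_{j},\eta_{i})$ with $(\partial_{j})_{j}\in comproj_{\mathbf{d},\mathbf{r}}^{\Lambda}$, each $\eta_{i}$ a monomorphism, and $\partial_{i-1}\eta_{i}=0$ lies in $X_{3}$: from $\partial_{i-1}\partial_{i}=0$ one has $im(\partial_{i})\subseteq ker(\partial_{i-1})=im(\eta_{i})$, so $\phi_{i}=\eta_{i}^{-1}\partial_{i}$ is defined as in Lemma \ref{PhiTiga}, and the remaining data $\gamma_{i}$ and $H_{i}$ exist by Lemma \ref{PhiDua} and Lemma \ref{PhiSatu}, so the tuple extends to a point of $comhom_{\mathbf{d},\mathbf{r}}^{\Lambda}$ projecting to it. Hence $X_{3}$ is open in $\mathcal{V}$, and $\pi_{4}$ factors as the open immersion $X_{3}\hookrightarrow\mathcal{V}$ followed by the vector bundle $p$, as claimed.

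The step I expect to require the most care is the constant-rank hypothesis of Proposition \ref{VBundle}: it rests entirely on the fact that $ker(\partial_{i-1})$ remains isomorphic to the fixed projective representation $M_{i}$ throughout $comproj_{\mathbf{d},\mathbf{r}}^{\Lambda}$, which is where the global dimension two assumption (via Lemma \ref{comhom}) and the fixed-rank condition enter.
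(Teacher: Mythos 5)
Your proposal is correct and takes essentially the same route as the paper: realize the locus without the injectivity condition as the kernel of a morphism of trivial vector bundles over $comproj_{\mathbf{d},\mathbf{r}}^{\Lambda}$ (the map $\eta_i\mapsto\partial_{i-1}\eta_i$ into $Hom_\Lambda(M_i,P^{\mathbf{d}_{i-2}})$), invoke constant fibre dimension of $\prod_i Hom_\Lambda(M_i,\ker(\partial_{i-1}))$ together with Proposition \ref{VBundle} to get a vector bundle, and then cut out $X_3$ as the open monomorphism locus. Your explicit verification that every such tuple actually lies in $X_3$, and your justification of constant rank via $\ker(\partial_{i-1})\cong M_i$, are slightly more detailed than the paper's but amount to the same argument.
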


\begin{proof}
Without lost of generality we may write
\begin{equation*}
X_{3}\subseteq comproj_{\mathbf{d},\mathbf{r}}^{\Lambda }\times
\prod\limits_{i\in \mathbb{Z}}Inj_{\Lambda }\left( M_{i},P^{\mathbf{d}%
_{i-1}}\right)
\end{equation*}%
where $\ \ \ Inj_{\Lambda }\left( M_{i},P^{\mathbf{d}_{i-1}}\right) =\left\{ \eta
\in Hom_{\Lambda }\left( M_{i},P^{\mathbf{d}_{i-1}}\right) |\eta \text{ is
injective}\right\} \ \ \ $ and $((\partial _{i})_{i\in \mathbb{Z}},(f_{i})_{i\in
\mathbb{Z}})\in X_{3}$ if $\partial _{i-1}f_{i}=0,$ for all $i\in \mathbb{Z}$%
. By the rank condition, $Inj_{\Lambda }\left( M_{i},P^{\mathbf{d}%
_{i-1}}\right) $ is open in $Hom_{\Lambda }\left( M_{i},P^{\mathbf{d}%
_{i-1}}\right) $. Let$\ $
\begin{equation*}
Y\subseteq comproj_{\mathbf{d},\mathbf{r}}^{\Lambda }\times \prod_{i\in
\mathbb{Z}}Hom_{\Lambda }(M_{i},P^{\mathbf{d}_{i-1}})
\end{equation*}%
consist of pairs $((\partial _{i})_{i\in \mathbb{Z}},(f_{i})_{i\in \mathbb{Z}%
})$ such that $\partial _{i-1}f_{i}=0,$ for all $i\in \mathbb{Z}$. Hence $%
X_{3}$ is open subset of $Y$ and therefore there is an open immersion $%
X_{3}\rightarrow Y$ with image those pairs $((\partial _{i})_{i\in \mathbb{Z}%
},(f_{i})_{i\in \mathbb{Z}})$ with $f_{i}$ injective for all $i\in \mathbb{Z}
$. The projection
\begin{equation*}
\pi ^{\prime }:Y\rightarrow comproj_{\mathbf{d},\mathbf{r}}^{\Lambda }
\end{equation*}%
is the kernel of the morphism of trivial vector bundles
\begin{equation*}
comproj_{\mathbf{d},\mathbf{r}}^{\Lambda }\times \prod_{i\in \mathbb{Z}%
}Hom_{\Lambda }(M_{i},P^{\mathbf{d}_{i-1}})\rightarrow comproj_{\mathbf{d},%
\mathbf{r}}^{\Lambda }\times \prod_{i\in \mathbb{Z}}Hom_{\Lambda }(M_{i},P^{%
\mathbf{d}_{i-2}})
\end{equation*}%
given by
\begin{equation*}
((\partial _{i})_{i\in \mathbb{Z}},(f_{i})_{i\in \mathbb{Z}})\mapsto
((\partial _{i})_{i\in \mathbb{Z}},(\partial _{i-1}f_{i})_{i\in \mathbb{Z}}).
\end{equation*}%
On fibres, the kernel is isomorphic to $\prod_{i\in \mathbb{Z}}Hom_{\Lambda
}(M_{i},ker(\partial _{i-1}))$, which has constant dimension since $M_{i}$
is projective, and so $\pi ^{\prime }$ is a vector bundle by Proposition \ref%
{VBundle}. The map $\pi _{3}$ is therefore the composition of an open
immersion with a vector bundle with base $comproj_{\mathbf{d},\mathbf{r}%
}^{\Lambda }$. The lemma follows.
\end{proof}

As before, let $M=\prod_{i\in \mathbb{Z}}M_i$, and $Aut_\Lambda M =
\prod_{i\in \mathbb{Z}}Aut_\Lambda M_i$.

\begin{corollary}
$\pi_4$ is a locally trivial $Aut_\Lambda M$-bundle.
\end{corollary}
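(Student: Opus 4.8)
The plan is to promote the previous lemma from a statement about a single vector bundle to a statement about a principal bundle under the group $Aut_\Lambda M$, by exhibiting the fibres of $\pi_4$ as torsors. Recall that $\pi_4$ sends $(\partial_i,\eta_i)_{i\in\mathbb{Z}}$ to $(\partial_i)_{i\in\mathbb{Z}}$, so the fibre over a fixed complex $(\partial_i)_{i\in\mathbb{Z}}$ consists of all tuples $(\eta_i)_{i\in\mathbb{Z}}$ of $\Lambda$-monomorphisms $\eta_i:M_i\to P^{\mathbf{d}_{i-1}}$ satisfying $\partial_{i-1}\eta_i=0$. By Lemma~\ref{comhom} and its surrounding discussion, $im(\eta_i)=ker(\partial_{i-1})\cong M_i$, so each such $\eta_i$ is an isomorphism of $M_i$ onto the fixed submodule $ker(\partial_{i-1})\subseteq P^{\mathbf{d}_{i-1}}$. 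First I would fix one such $\eta_i$ for each $i$ (which exists by the open-immersion description in Lemma~\ref{PhiEmpat}, as the fibre is nonempty over any point of $comproj_{\mathbf{d},\mathbf{r}}^\Lambda$) and observe that any other monomorphism $\eta_i'$ with the same image and kernel zero differs from $\eta_i$ by precomposition with an automorphism of $M_i$.

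The key step is to make this precise as a free and transitive action. The group $Aut_\Lambda M=\prod_{i\in\mathbb{Z}}Aut_\Lambda M_i$ acts on $X_3$ by $g\cdot(\partial_i,\eta_i)_{i\in\mathbb{Z}}=(\partial_i,\eta_i g_i^{-1})_{i\in\mathbb{Z}}$; one checks this preserves the defining conditions, since $\partial_{i-1}\eta_i g_i^{-1}=0$ and $\eta_i g_i^{-1}$ remains a monomorphism. This action fixes the $comproj$-component, hence preserves the fibres of $\pi_4$, making $\pi_4$ an $Aut_\Lambda M$-equivariant morphism to a trivial action on the base. Transitivity on each fibre follows because given $\eta_i,\eta_i'$ with common image $ker(\partial_{i-1})$, the composite $\eta_i^{-1}\eta_i'$ (using the left inverse of $\eta_i$ from Lemma~\ref{InversMatriks}) is the required automorphism $g_i$; freeness is immediate since $\eta_i g_i^{-1}=\eta_i$ forces $g_i=\operatorname{id}$ by injectivity of $\eta_i$.

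To conclude that $\pi_4$ is a \emph{locally trivial} principal $Aut_\Lambda M$-bundle rather than merely a set-theoretic torsor, I would invoke the structure already extracted in Lemma~\ref{PhiEmpat}: there $\pi_4$ was realized as an open immersion $X_3\hookrightarrow Y$ followed by the vector bundle $\pi':Y\to comproj_{\mathbf{d},\mathbf{r}}^\Lambda$ whose fibre over $(\partial_i)$ is $\prod_i Hom_\Lambda(M_i,ker(\partial_{i-1}))$. The subset $X_3$ picks out, fibrewise, the \emph{isomorphisms} $M_i\xrightarrow{\sim}ker(\partial_{i-1})$ inside this Hom-space. Locally over the base, a trivialization of the vector bundle $\pi'$ identifies $ker(\partial_{i-1})$ with the fixed space $M_i$, under which the isomorphisms form exactly a copy of $Aut_\Lambda M_i$ sitting as an open subset (the nonvanishing-determinant locus) of $Hom_\Lambda(M_i,M_i)$, acted on freely and transitively by right translation. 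This identifies $\pi_4$ locally with the projection $U\times Aut_\Lambda M\to U$ compatibly with the group action, which is the definition of local triviality.

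The main obstacle is the local-triviality verification: the identification of $ker(\partial_{i-1})$ with $M_i$ varies with the point of the base, so one must check that the $Aut_\Lambda M_i$-structure on the fibres is carried along continuously by the trivializations of $\pi'$ and that the transition functions land in $Aut_\Lambda M_i$. This is essentially automatic from the linear-algebraic fact that the left inverse $\eta_i^{-1}$ depends algebraically (morphically) on $\eta_i$, by Lemma~\ref{InversMatriks}, so the transition cocycle $\eta_i^{-1}\eta_i'$ between two local sections is a morphism into $Aut_\Lambda M_i$. Once this is in hand, the group being $Aut_\Lambda M=\prod_i Aut_\Lambda M_i$ is an irreducible algebraic group (a product of open subsets of the affine endomorphism algebras cut out by invertibility), so the fibres are irreducible and rational, matching the conclusion claimed in the main theorem.
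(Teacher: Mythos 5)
Your proposal is correct and follows essentially the same route as the paper: both exploit the decomposition of $\pi_4$ from Lemma~\ref{PhiEmpat} and the observation that, fibrewise, $X_3$ picks out the injective maps inside $\prod_i Hom_\Lambda(M_i,\ker(\partial_{i-1}))$, which form a copy of $Aut_\Lambda M_i$ since $M_i\cong\ker(\partial_{i-1})$. You merely spell out more explicitly the torsor structure and the morphic dependence of transition functions (via Lemma~\ref{InversMatriks}), details the paper leaves implicit.
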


\begin{proof}
The proof of Lemma \ref{PhiEmpat} shows that on fibres, the kernel of the
trivial bundle is isomorphic to $\prod\limits_{i\in \mathbb{Z}%
}Hom_{\Lambda }\left( M_{i},ker\left( \partial _{i-1}\right) \right) $. Since $X_3$ is open in $Y$, on fibres the maps that belong to $X_3$ are the injective maps. Now, the set of all injective maps in $Hom_{\Lambda }\left( M_{i},ker\left( \partial _{i-1}\right) \right)$
is isomorphic to $Aut_{\Lambda
}\left( M_{i}\right)$ since $M_{i}\cong ker\left(
\partial _{i-1}\right)$. Hence, $\pi _{4}$ is locally trivial with fibres $%
Aut_{\Lambda }M$-equivariantly isomorphic to the group $Aut_{\Lambda }M$.
\end{proof}

Having proved that $\pi_i$ are smooth, we can conclude that $\pi$ is smooth.
Also, the previous four lemmas show that the fibres are rational and irreducible, and
so Lemma \ref{phi} follows.

As a consequence of the proofs, we have
the following dimension formula.

\begin{corollary}
\label{dimpi}
$$dim
(comproj_{\mathbf{d},\mathbf{r}}^{\Lambda }) = dim (comhom_{\mathbf{d},\mathbf{r}}^{\Lambda}) - \sum_{i\in \mathbb{Z}}(
\mathbf{h}_i^T\mathbf{h}_i + (\Theta^{-1}\mathbf{k}_i)^T\mathbf{k}_i).$$
\end{corollary}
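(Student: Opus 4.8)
The plan is to track the dimension of each variety $X_i$ in the decomposition $\pi = \pi_4 \circ \pi_3 \circ \pi_2 \circ \pi_1$ and account for the dimension change at each step. Since $\pi_1$ and $\pi_3$ are isomorphisms (Lemmas \ref{PhiSatu} and \ref{PhiTiga}), they contribute nothing to the dimension count, so $\dim X_0 = \dim X_1 = \dim X_2 = \dim X_3$. The dimension must therefore drop only across $\pi_2$ (the $Gl_{\mathbf{h}}$-bundle) and $\pi_4$ (the $Aut_\Lambda M$-bundle), and the task reduces to computing the fibre dimensions of these two bundles and summing them.

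\emph{First} I would handle $\pi_2$. By Lemma \ref{PhiDua} this is a locally trivial $Gl_{\mathbf{h}}$-bundle, so its fibre dimension is $\dim Gl_{\mathbf{h}} = \sum_{i\in\mathbb{Z}} \dim Gl_{\mathbf{h}_i}$. Since $Gl_{\mathbf{h}_i} = \prod_{a\in Q_0} Gl_{(\mathbf{h}_i)_a}$, we have $\dim Gl_{\mathbf{h}_i} = \sum_{a\in Q_0} (\mathbf{h}_i)_a^2 = \mathbf{h}_i^T\mathbf{h}_i$, which is exactly the first term appearing in the sum. \emph{Next} I would handle $\pi_4$. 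By the Corollary following Lemma \ref{PhiEmpat}, this is a locally trivial $Aut_\Lambda M$-bundle with fibres isomorphic to $Aut_\Lambda M = \prod_{i\in\mathbb{Z}} Aut_\Lambda M_i$, so its fibre dimension is $\sum_{i\in\mathbb{Z}} \dim Aut_\Lambda M_i$.

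\emph{The main obstacle} is identifying $\dim Aut_\Lambda M_i$ with the expression $(\Theta^{-1}\mathbf{k}_i)^T\mathbf{k}_i$. Here I would use that $M_i$ is a projective representation of dimension vector $\mathbf{k}_i$ (by the first Lemma on kernels). Writing $M_i \cong P^{\mathbf{c}_i}$ for the multiplicity vector $\mathbf{c}_i$ of its indecomposable projective summands, the defining relation $\Theta\mathbf{c}_i = \dim(P^{\mathbf{c}_i}) = \mathbf{k}_i$ gives $\mathbf{c}_i = \Theta^{-1}\mathbf{k}_i$. Since $\mathrm{Hom}_\Lambda(P_a, P_b)$ has dimension equal to the multiplicity of the simple $S_a$ in $P_b$, and $Aut_\Lambda M_i$ is an open dense subset of the affine space $\mathrm{End}_\Lambda(M_i)$, we get $\dim Aut_\Lambda M_i = \dim_\Bbbk \mathrm{End}_\Lambda(P^{\mathbf{c}_i}) = \mathbf{c}_i^T\dim(P^{\mathbf{c}_i}) = (\Theta^{-1}\mathbf{k}_i)^T\mathbf{k}_i$, matching the second term. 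Verifying carefully that $\dim_\Bbbk \mathrm{End}_\Lambda(P^{\mathbf{c}_i}) = \mathbf{c}_i^T \Theta \mathbf{c}_i = \mathbf{c}_i^T \mathbf{k}_i$ via the bilinear pairing encoded by $\Theta$ is the delicate step.

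\emph{Finally}, combining these contributions yields
\begin{equation*}
\dim(comproj_{\mathbf{d},\mathbf{r}}^{\Lambda}) = \dim X_3 - \dim(Aut_\Lambda M) = \dim X_1 - \dim(Aut_\Lambda M)
\end{equation*}
and since $\dim X_1 = \dim X_0 - \dim Gl_{\mathbf{h}} = \dim(comhom_{\mathbf{d},\mathbf{r}}^{\Lambda}) - \sum_i \mathbf{h}_i^T\mathbf{h}_i$, subtracting the two fibre dimensions gives the claimed formula. The only genuine content beyond the already-established bundle structure is the endomorphism-algebra dimension computation for projective representations, which follows directly from the definition of the Cartan matrix $\Theta$.
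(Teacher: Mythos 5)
Your proposal is correct and takes essentially the same route as the paper: the paper's proof likewise just reads off the fibre dimensions of $\pi_2$ and $\pi_4$ as $\sum_{i}\mathbf{h}_i^T\mathbf{h}_i$ and $\sum_{i}(\Theta^{-1}\mathbf{k}_i)^T\mathbf{k}_i$, and your Cartan-matrix computation $\dim Aut_\Lambda M_i=\dim_\Bbbk End_\Lambda(P^{\mathbf{c}_i})=\mathbf{c}_i^T\Theta\mathbf{c}_i=(\Theta^{-1}\mathbf{k}_i)^T\mathbf{k}_i$ is exactly the detail the paper leaves implicit. Only a harmless bookkeeping slip: the isomorphisms give $\dim X_0=\dim X_1$ and $\dim X_2=\dim X_3$, while the drops occur across $\pi_2$ and $\pi_4$, so the full chain of equalities in your first paragraph cannot hold as written, though the two misplacements cancel and your final formula is right.
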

\begin{proof}
The  sum $\sum_{i\in \mathbb{Z}}
\mathbf{h}_i^T\mathbf{h}_i$ computes the dimension of the fibre of $\pi_2$
and the sum $\sum_{i\in \mathbb{Z}} (\Theta^{-1}\mathbf{k}_i)^T\mathbf{k}_i$
is the fibre dimension of $\pi_4$. The formula follows.
\end{proof}

\subsection{The smooth morphism from $comhom_{\mathbf{d},\mathbf{r}%
}^{\Lambda }$ to $rep_{\mathbf{h}}^{\Lambda }$}

The aim now is to prove the following lemma, which is the second part of the
theorem.

\begin{lemma}
\label{rho}The morphism
$\rho :comhom_{\mathbf{d},\mathbf{r}}^{\Lambda }\rightarrow
rep_{\mathbf{h}}^{\Lambda }
$ given by
\begin{equation*}
\rho ((\partial _{i},\eta _{i},\phi _{i},\gamma _{i},H_{i})_{i\in \mathbb{Z%
}})=H_{i}
\end{equation*}%
is smooth.
\end{lemma}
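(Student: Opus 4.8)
The plan is to mimic the proof of Lemma \ref{phi}: I would factor $\rho$ as a composition $\rho=\rho_4\circ\rho_3\circ\rho_2\circ\rho_1$ of projection maps, each of which is an isomorphism, a principal $Aut_\Lambda M$-bundle, or the composition of an open immersion with a vector bundle, and then invoke the fact that all of these are smooth. The four maps forget the coordinates in the order $\partial_i$, then $\eta_i$, then $\phi_i$, then $\gamma_i$, leaving $(H_i)_{i\in\mathbb Z}$ at the end. Throughout, the two structural inputs are Proposition \ref{VBundle} (constant rank produces sub-bundles) and the observation that, because each $M_i$ and each $P^{\mathbf d_i}$ is projective, the dimension of $Hom_\Lambda(M_i,-)$ and of $Hom_\Lambda(P^{\mathbf d_i},-)$ on representations of a fixed dimension vector is constant; this is what turns the relevant $Hom$-spaces into (locally) trivial vector bundles over the intermediate bases.

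First, forgetting $\partial_i$ is an isomorphism onto its image, since $\partial_i=\eta_i\phi_i$ reconstructs it, exactly as in Lemma \ref{PhiTiga}. This identifies $comhom_{\mathbf d,\mathbf r}^\Lambda$ with the variety of tuples $(\eta_i,\phi_i,\gamma_i,H_i)_{i\in\mathbb Z}$ satisfying $\gamma_i\phi_i=0$, with $\phi_i$ of rank $\mathbf r_i$, $\gamma_i$ an epimorphism onto $H_i$, and $\eta_i$ a monomorphism with $im(\eta_i)=ker(\phi_{i-1})$; the remaining relations $\partial_i\partial_{i+1}=0$ and $\partial_{i-1}\eta_i=0$ are then automatic, since $\phi_i\eta_{i+1}=0$ follows from $im(\eta_{i+1})=ker(\phi_i)$. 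Next I would forget $\eta_i$. Over the base of tuples $(\phi_i,\gamma_i,H_i)_{i\in\mathbb Z}$, Proposition \ref{VBundle} makes $ker(\phi_{i-1})$ a sub-bundle of the trivial bundle with fibre $P^{\mathbf d_{i-1}}$, and the admissible $\eta_i$ are exactly the isomorphisms $M_i\to ker(\phi_{i-1})$ followed by inclusion. Since $\Lambda$ has global dimension at most two, $ker(\phi_{i-1})$ is projective of dimension vector $\mathbf k_i$, hence isomorphic to $M_i$, so this fibre is a nonempty $Aut_\Lambda M_i$-torsor; as in the Corollary following Lemma \ref{PhiEmpat}, $\rho_2$ is a locally trivial $Aut_\Lambda M$-bundle, and $Aut_\Lambda M$ is irreducible and rational.

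It then remains to forget $\phi_i$ and $\gamma_i$. Forgetting $\phi_i$ lands in the variety of tuples $(\gamma_i,H_i)_{i\in\mathbb Z}$ with $\gamma_i$ an epimorphism onto $H_i$; here $ker(\gamma_i)$ is a sub-bundle by Proposition \ref{VBundle}, the space $Hom_\Lambda(P^{\mathbf d_i},ker(\gamma_\bullet))$ is a vector bundle of constant rank by projectivity of $P^{\mathbf d_i}$, and the admissible $\phi_i$ form the open sub-bundle of surjections onto $ker(\gamma_i)$ (full rank being an open condition, via Lemma \ref{InversMatriks}); thus $\rho_3$ is an open immersion followed by a vector bundle. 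Finally, forgetting $\gamma_i$ maps to $rep_{\mathbf h}^\Lambda$: the space $Hom_\Lambda(M_i,H_\bullet)$ is a vector bundle of constant rank over $rep_{\mathbf h}^\Lambda$ by projectivity of $M_i$, the admissible $\gamma_i$ are its open sub-bundle of epimorphisms, and by the identification $im(\rho)=rep_{\mathbf h,\mathbf r}^\Lambda$ (which is open by Lemma \ref{openrep}) this last map is again an open immersion followed by a vector bundle. Composing, $\rho$ is smooth, and since each fibre is built from affine spaces, open subsets thereof, and the irreducible rational group $Aut_\Lambda M$, the fibres are in fact irreducible and rational.

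The step I expect to be the main obstacle is the passage through $\phi_i$ and $\gamma_i$, that is, controlling the images and the sub-bundle structure simultaneously. Unlike the situation for $\pi$, where the base $comproj_{\mathbf d,\mathbf r}^\Lambda$ is given at the outset, here I must verify that each intermediate projection is surjective onto an open locus of the next base and that the relevant $Hom$-spaces have locally constant dimension; both hinge on the projectivity of $M_i$ and $P^{\mathbf d_i}$ together with the constant-rank hypothesis of Proposition \ref{VBundle}. In particular, the nonemptiness of the $Aut_\Lambda M$-torsor fibres and the identification of the final image with the open set $rep_{\mathbf h,\mathbf r}^\Lambda$ both rely essentially on global dimension at most two, which guarantees that the syzygies $ker(\phi_{i-1})$ are projective and hence isomorphic to the fixed modules $M_i$.
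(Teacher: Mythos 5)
Your decomposition is exactly the paper's: the same four projections, forgetting $\partial_i$, $\eta_i$, $\phi_i$, $\gamma_i$ in that order, with $\rho_1$ an isomorphism via $\partial_i=\eta_i\phi_i$, and the remaining steps handled by Proposition \ref{VBundle} together with the constancy of $\dim Hom_\Lambda(M_i,-)$ and $\dim Hom_\Lambda(P^{\mathbf d_i},-)$ coming from projectivity; the identification of the final image with the open set $rep_{\mathbf h,\mathbf r}^\Lambda$ is also as in the paper. The one place you genuinely diverge is $\rho_2$: you assert it is a locally trivial $Aut_\Lambda M$-bundle ``as in the Corollary following Lemma \ref{PhiEmpat},'' whereas the paper deliberately proves $\rho_2$ only as an open immersion followed by a vector bundle and explicitly remarks that $\rho_2$ is \emph{not} an $Aut_\Lambda M$-bundle, because the fibres are not stable under the $Aut_\Lambda M$-action it defined on $comhom_{\mathbf d,\mathbf r}^\Lambda$ (that action also moves $\phi_i$ and $\gamma_i$, reflecting the fact that homology does not determine the quasi-isomorphism class in global dimension two). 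Your fibrewise torsor structure can be salvaged by using the precomposition-only action $\eta_i\mapsto\eta_i g_i^{-1}$, which does preserve the fibres, but as written your claim sits in tension with the paper's remark and is not needed: the sub-bundle argument you also invoke (namely that $Hom_\Lambda(M_i,\ker(\phi_{i-1}))$ has constant dimension, so $Y_1\to Z_2$ is a vector bundle and $Z_1\subseteq Y_1$ is open) is what actually carries the smoothness, and is precisely the paper's proof. Everything else, including the openness of the rank and surjectivity conditions and the use of global dimension at most two to make the fibres nonempty, matches the paper.
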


Similar to the case of $\pi $, we decompose%
\begin{equation*}
\rho =\rho _{4}\circ \rho _{3}\circ \rho _{2}\circ \rho _{1}
\end{equation*}%
and prove that each $\rho _{i}$ is a smooth morphism. We let $\rho
_{i}:Z_{i-1}\rightarrow Z_{i}$ be projection maps, where
\begin{equation*}
\begin{array}{rl}
Z_{0} & :=comhom_{\mathbf{d},\mathbf{r}}^{\Lambda }, \\
Z_{1} & \text{is the set of tuples }(\eta _{i},\phi _{i},\gamma
_{i},H_{i})_{i\in \mathbb{Z}}\text{ obtained by projecting }Z_{0}\text{ via }\rho_1,
\\
Z_{2} & \text{is the set of tuples }(\phi _{i},\gamma
_{i},H_{i})_{i\in \mathbb{Z}}\text{ obtained by projecting }Z_{1}\text{ via }\rho_2, \\
Z_{3} & \text{is the set of tuples }(\gamma
_{i},H_{i})_{i\in \mathbb{Z}}\text{ obtained by projecting }Z_{2}\text{ via }\rho_3, \\
Z_{4} & :=rep_{\mathbf{h},\mathbf{r}}^{\Lambda },%
\end{array}%
\end{equation*}
and the maps $\rho _{i}$ are restrictions of the projection maps.

\begin{lemma}
\label{RhoSatu}The map
$\rho _{1}:comhom_{\mathbf{d},\mathbf{r}}^{\Lambda }\rightarrow Z_{1}
$ given by
\begin{equation*}
\rho _{1}(\left( \partial _{i},\eta _{i},\phi _{i},\gamma
_{i},H_{i}\right) _{i\in \mathbb{Z}}=\left( \eta _{i},\phi _{i},\gamma
_{i},H_{i}\right) _{i\in \mathbb{Z}}
\end{equation*}%
is an isomorphism of varieties.
\end{lemma}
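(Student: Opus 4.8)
The plan is to prove that $\rho_1$ is an isomorphism by exhibiting an explicit two-sided inverse, just as was done for $\pi_1$ in Lemma~\ref{PhiSatu}. The key observation is that the equation $\partial_i=\eta_i\phi_i$ is one of the defining conditions of $comhom_{\mathbf{d},\mathbf{r}}^{\Lambda}$, so the differential carries no information beyond what is already recorded in $\eta_i$ and $\phi_i$. Hence forgetting the coordinates $(\partial_i)_{i\in\mathbb{Z}}$, which is precisely what $\rho_1$ does, should be reversible. After fixing bases so that all maps are tuples of matrices, I would define a candidate inverse
$$\sigma\big((\eta_i,\phi_i,\gamma_i,H_i)_{i\in\mathbb{Z}}\big)=(\eta_i\phi_i,\eta_i,\phi_i,\gamma_i,H_i)_{i\in\mathbb{Z}},$$
the adjoined entry $\eta_i\phi_i$ being obtained by matrix multiplication.

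First I would check that $\sigma$ is a morphism into the ambient affine space: its only nontrivial component, $\eta_i\phi_i$, is polynomial in the matrix entries of $\eta_i$ and $\phi_i$, so $\sigma$ is regular. Next I would verify that the image of $\sigma$ actually lies in $comhom_{\mathbf{d},\mathbf{r}}^{\Lambda}$. Since $Z_1$ is by definition the image $\rho_1(comhom_{\mathbf{d},\mathbf{r}}^{\Lambda})$, any tuple $(\eta_i,\phi_i,\gamma_i,H_i)_{i\in\mathbb{Z}}\in Z_1$ is the projection of some point $(\partial_i,\eta_i,\phi_i,\gamma_i,H_i)_{i\in\mathbb{Z}}$ of $comhom_{\mathbf{d},\mathbf{r}}^{\Lambda}$, and the defining relation $\partial_i=\eta_i\phi_i$ shows that $\sigma$ recovers exactly this point. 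In particular all the defining conditions are inherited: the relations involving the differential become $\eta_i\phi_i\eta_{i+1}\phi_{i+1}=0$ and $\eta_{i-1}\phi_{i-1}\eta_i=0$, while the remaining conditions ($\eta_i$ a monomorphism, $\gamma_i$ an epimorphism, $rank(\phi_i)=\mathbf{r}_i$, and $\gamma_i\phi_i=0$) do not mention $\partial_i$ and already hold on $Z_1$.

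Finally I would confirm that $\sigma$ is a two-sided inverse of $\rho_1$. The composite $\rho_1\circ\sigma$ is the identity on $Z_1$ because $\rho_1$ merely deletes the first coordinate, and $\sigma\circ\rho_1$ is the identity on $comhom_{\mathbf{d},\mathbf{r}}^{\Lambda}$ because there the deleted coordinate equals $\eta_i\phi_i$ and is therefore faithfully reconstructed. As both $\rho_1$ and $\sigma$ are morphisms, this proves that $\rho_1$ is an isomorphism of varieties. I do not expect any serious obstacle here; the one point that needs care is confirming that the reconstructed tuple satisfies every equation cutting out $comhom_{\mathbf{d},\mathbf{r}}^{\Lambda}$, and this reduces entirely to the built-in identity $\partial_i=\eta_i\phi_i$.
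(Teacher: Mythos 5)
Your proposal is correct and is essentially identical to the paper's own (much terser) proof: the paper also inverts $\rho_1$ by the map $(\eta_i,\phi_i,\gamma_i,H_i)_{i\in\mathbb{Z}}\mapsto(\eta_i\phi_i,\eta_i,\phi_i,\gamma_i,H_i)_{i\in\mathbb{Z}}$, citing the defining relation $\partial_i=\eta_i\phi_i$. Your extra verifications (regularity of the matrix product and that the image lands back in $comhom_{\mathbf{d},\mathbf{r}}^{\Lambda}$) are exactly the details the paper leaves implicit.
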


\begin{proof}
The morphism $\rho_{1}$ has an inverse given by
\begin{equation*}
\left( \eta _{i},\phi _{i},\gamma _{i},H_{i}\right) _{i\in \mathbb{Z}%
}\mapsto \left( \eta _{i}\phi _{i},\eta _{i},\phi _{i},\gamma
_{i},H_{i}\right) _{i\in \mathbb{Z}},
\end{equation*}%
due to the equation $\partial _{i}=\eta _{i}\phi _{i}$ in the definition
of $comhom_{\mathbf{d},\mathbf{r}}^{\Lambda }$.
\end{proof}

Since open immersions and
vector bundles are smooth, the following lemma proves smoothness of $\rho
_{2}$.

\begin{lemma}
\label{RhoDua}The map $\rho _{2}:Z_{1}\rightarrow Z_{2}$ defined by
\begin{equation*}
\rho_2(\left( \eta _{i}, \phi _{i} , \gamma _{i}, H_{i} \right) _{i\in
\mathbb{Z}})= \left( \phi _{i} , \gamma _{i} , H_{i}\right) _{i\in \mathbb{Z}%
}
\end{equation*}
is the composition of an open immersion with a vector bundle with base $Z_2$.
\end{lemma}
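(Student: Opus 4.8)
The plan is to realize $\rho_2$ exactly as in the proof of Lemma \ref{PhiEmpat}: as the projection of a kernel vector bundle over $Z_2$, restricted to the open locus where the forgotten maps are injective. First I would describe the fibres of $\rho_2$. A lift of a point $(\phi_i,\gamma_i,H_i)_{i\in\mathbb{Z}}\in Z_2$ to $Z_1$ is a choice of homomorphisms $\eta_i\in Hom_\Lambda(M_i,P^{\mathbf{d}_{i-1}})$ making $(\eta_i,\phi_i,\gamma_i,H_i)_{i\in\mathbb{Z}}$ satisfy the defining conditions of $comhom_{\mathbf{d},\mathbf{r}}^\Lambda$, with $\partial_i:=\eta_i\phi_i$. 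Since $\eta_{i-1}$ is a monomorphism, the relation $\partial_{i-1}\eta_i=\eta_{i-1}\phi_{i-1}\eta_i=0$ is equivalent to $\phi_{i-1}\eta_i=0$; granting these relations for all $i$, the remaining condition $\partial_i\partial_{i+1}=\eta_i(\phi_i\eta_{i+1})\phi_{i+1}=0$ becomes automatic, while $rank(\partial_i)=rank(\phi_i)=\mathbf{r}_i$ and $im(\eta_i)=ker(\partial_{i-1})$ follow from the dimension count $dim(M_i)=\mathbf{k}_i=\Theta\mathbf{d}_{i-1}-\mathbf{r}_{i-1}$. Hence the fibre of $\rho_2$ over $(\phi_i,\gamma_i,H_i)_{i\in\mathbb{Z}}$ is precisely the set of tuples $(\eta_i)_{i\in\mathbb{Z}}$ with each $\eta_i$ an injective $\Lambda$-homomorphism satisfying $\phi_{i-1}\eta_i=0$, and these conditions decouple over $i$.

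Next I would package these fibres into a bundle. Consider the subvariety
$$W\subseteq Z_2\times\prod_{i\in\mathbb{Z}}Hom_\Lambda(M_i,P^{\mathbf{d}_{i-1}})$$
consisting of pairs $((\phi_i,\gamma_i,H_i)_{i\in\mathbb{Z}},(\eta_i)_{i\in\mathbb{Z}})$ with $\phi_{i-1}\eta_i=0$ for all $i$. Then $W$ is the kernel of the morphism of trivial vector bundles over $Z_2$
$$Z_2\times\prod_{i\in\mathbb{Z}}Hom_\Lambda(M_i,P^{\mathbf{d}_{i-1}})\longrightarrow Z_2\times\prod_{i\in\mathbb{Z}}Hom_\Lambda(M_i,M_{i-1})$$
given by $(\eta_i)_{i\in\mathbb{Z}}\mapsto(\phi_{i-1}\eta_i)_{i\in\mathbb{Z}}$, whose fibrewise kernel is $\prod_{i\in\mathbb{Z}}Hom_\Lambda(M_i,ker(\phi_{i-1}))$.

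To invoke Proposition \ref{VBundle} I must check that this bundle map has constant rank, i.e.\ that $\sum_{i}dim\,Hom_\Lambda(M_i,ker(\phi_{i-1}))$ does not vary over $Z_2$. This is the step where the hypothesis on the global dimension enters: every point of $Z_2$ lifts to $comhom_{\mathbf{d},\mathbf{r}}^\Lambda$, so $ker(\phi_{i-1})=ker(\partial_{i-1})$ is a projective representation of dimension vector $\mathbf{k}_i$, since $\Lambda$ has global dimension at most two. As $\Theta$ is invertible, a projective representation is determined up to isomorphism by its dimension vector, whence $ker(\phi_{i-1})\cong M_i$; and since $M_i$ is projective, $dim\,Hom_\Lambda(M_i,ker(\phi_{i-1}))=dim\,End_\Lambda(M_i)$ is constant. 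Thus $W\to Z_2$ is a vector bundle.

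Finally, injectivity of $\eta_i$ is the maximal-rank condition $rank(\eta_i)=\mathbf{k}_i$, which is open, so the locus in $W$ where all $\eta_i$ are injective is open; by the fibre description this locus is precisely $Z_1$. This produces an open immersion $Z_1\hookrightarrow W$ whose composition with the vector bundle projection $W\to Z_2$ is $\rho_2$, as required. I expect the constant-rank verification to be the only real obstacle, resting as it does on the projectivity of $ker(\phi_{i-1})$ and on the ensuing isomorphism $ker(\phi_{i-1})\cong M_i$; everything else is the same bookkeeping as in Lemma \ref{PhiEmpat}.
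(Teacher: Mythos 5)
Your proposal is correct and follows essentially the same route as the paper: realize $\rho_2$ as the open immersion of $Z_1$ into the kernel bundle $\{\phi_{i-1}\eta_i=0\}$ over $Z_2$, with Proposition \ref{VBundle} giving the vector bundle structure. The only cosmetic difference is in the constant-rank check, where the paper simply notes that $\dim Hom_\Lambda(M_i,\ker(\partial_{i-1}))$ is constant because $M_i$ is projective (so this dimension depends only on the fixed dimension vector $\mathbf{k}_i$ of the kernel), whereas you route through the projectivity of $\ker(\phi_{i-1})$ and the isomorphism $\ker(\phi_{i-1})\cong M_i$; both are valid.
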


\begin{proof}
Let $Y_{1}$ be defined as $Z_{1}$, but without the restriction that $\eta
_{i}$ should be a monomorphism.
Then $Y_1$ is isomorphic to the kernel of the homomorphism of trivial vector bundles
$$\prod_{i\in \mathbb{Z}}Hom(M_i,P^{\mathbf{d}_{i-1}})\times Z_2 \rightarrow \prod_{i\in\mathbb{Z}}Hom(M_i,M_{i-1})\times Z_2$$
given by
$$(\eta_i,\phi_i,\gamma_i,H_i)_{i\in\mathbb{Z}}\mapsto
(\phi_{i-1}\eta_i,\phi_i,\gamma_i,H_i)_{i\in\mathbb{Z}}.$$
By Lemma \ref{comhom}, $im(\eta_i)=ker(\partial_{i-1})=ker(\phi_{i-1})$ so that $\phi_{i-1}\eta_i=0$.
On fibres, the kernel is isomorphic to $\prod_{i\in \mathbb{Z}}Hom_\Lambda(M_i,ker(\partial_{i-1}))$,
which have constant dimension since $M_i$ is projective. Hence, $Y_1\rightarrow Z_2$ is a vector bundle by Proposition \ref{VBundle}.
There is an open
immersion $Z_{1}\rightarrow Y_{1}$ with image those tuples with $\eta _{i}$
injective for all $i$. Therefore $\rho _{2}$ is the composition of an open
immersion with a vector bundle with base $Z_{2}$, and the proof is complete.
\end{proof}

Although, the fibres of $\rho_2$ are isomorphic to
$Aut_\Lambda M$, they are not in general closed under the action of
$Aut_\Lambda M$, and so $\rho_2$ is
not a $Aut_\Lambda M$-bundle. This is because homology of a complex
does not determine its quasi-isomorphism class for an algebra
of global dimension two.

\begin{lemma}
\label{RhoTiga}The map $\rho _{3}:Z_{2}\rightarrow Z_{3}$ defined by
\begin{equation*}
\rho _{3}(\left( \phi _{i},\gamma _{i},H_{i}\right) _{i\in \mathbb{Z}%
})=\left( \gamma _{i},H_{i}\right) _{i\in \mathbb{Z}}
\end{equation*}%
is the composition of an open immersion with a vector bundle with base $%
Z_{3} $.
\end{lemma}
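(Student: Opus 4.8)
The plan is to mirror the proofs of Lemma \ref{PhiEmpat} and Lemma \ref{RhoDua}, realising $\rho_3$ as an open immersion followed by a vector bundle with base $Z_3$. First I would introduce an auxiliary variety $Y_2$, defined exactly as $Z_2$ but dropping the rank condition $rank(\phi_i)=\mathbf{r}_i$; thus $Y_2$ consists of tuples $(\phi_i,\gamma_i,H_i)_{i\in\mathbb{Z}}$ with $\gamma_i\phi_i=0$ and $\gamma_i\colon M_i\rightarrow H_i$ a $\Lambda$-epimorphism. The condition $\gamma_i\phi_i=0$ is equivalent to $im(\phi_i)\subseteq ker(\gamma_i)$, so that the fibre of the projection $Y_2\rightarrow Z_3$ over a point $(\gamma_i,H_i)_{i\in\mathbb{Z}}$ is $\prod_{i\in\mathbb{Z}}Hom_\Lambda(P^{\mathbf{d}_i},ker(\gamma_i))$.

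Second, I would exhibit $Y_2$ as the kernel of a morphism of trivial vector bundles over $Z_3$,
$$\prod_{i\in\mathbb{Z}}Hom_\Lambda(P^{\mathbf{d}_i},M_i)\times Z_3\longrightarrow \prod_{i\in\mathbb{Z}}Hom(P^{\mathbf{d}_i},\Bbbk^{\mathbf{h}_i})\times Z_3,\qquad (\phi_i,\gamma_i,H_i)_i\mapsto(\gamma_i\phi_i,\gamma_i,H_i)_i,$$
using that $Hom_\Lambda(P^{\mathbf{d}_i},M_i)$ is a fixed vector space (both $P^{\mathbf{d}_i}$ and $M_i$ being fixed) and that the underlying graded space of every $H_i$ is $\Bbbk^{\mathbf{h}_i}$, so that the target, interpreted as graded maps, is also trivial. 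To apply Proposition \ref{VBundle} I must check that the rank of this map is constant along $Z_3$. This is the step I expect to require the most care, and it rests on the constancy of $\dim Hom_\Lambda(P^{\mathbf{d}_i},ker(\gamma_i))$. Since $P^{\mathbf{d}_i}$ is projective, applying $Hom_\Lambda(P^{\mathbf{d}_i},-)$ to the short exact sequence $0\rightarrow ker(\gamma_i)\rightarrow M_i\rightarrow H_i\rightarrow 0$ is exact, giving $\dim Hom_\Lambda(P^{\mathbf{d}_i},ker(\gamma_i))=\dim Hom_\Lambda(P^{\mathbf{d}_i},M_i)-\dim Hom_\Lambda(P^{\mathbf{d}_i},H_i)$; the first term is fixed, while $\dim Hom_\Lambda(P^{\mathbf{d}_i},H_i)=\mathbf{d}_i^T\mathbf{h}_i$ depends only on the prescribed dimension vectors. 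Equivalently, projectivity of $P^{\mathbf{d}_i}$ together with surjectivity of $\gamma_i$ shows the image of $\phi_i\mapsto\gamma_i\phi_i$ is all of $Hom_\Lambda(P^{\mathbf{d}_i},H_i)$, so the rank equals $\sum_i\mathbf{d}_i^T\mathbf{h}_i$ everywhere. Hence Proposition \ref{VBundle} applies and $Y_2\rightarrow Z_3$ is a vector bundle.

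Finally, I would identify $Z_2$ inside $Y_2$ as an open subset. On $Y_2$ we have $im(\phi_i)\subseteq ker(\gamma_i)$ with $\dim ker(\gamma_i)=\mathbf{k}_i-\mathbf{h}_i=\mathbf{r}_i$, so the condition $rank(\phi_i)=\mathbf{r}_i$ is precisely that $\phi_i$ be surjective onto $ker(\gamma_i)$, that is, of maximal rank. By lower semicontinuity of rank this is an open condition, so $Z_2\hookrightarrow Y_2$ is an open immersion. Therefore $\rho_3$ is the composition of the open immersion $Z_2\hookrightarrow Y_2$ with the vector bundle $Y_2\rightarrow Z_3$, which proves the lemma.
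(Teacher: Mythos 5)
Your proposal is correct and follows essentially the same route as the paper: the paper also introduces $Y_2$ by relaxing $im(\phi_i)=ker(\gamma_i)$ to $\gamma_i\phi_i=0$ (equivalent to dropping the rank condition, as you note), realises $Y_2\rightarrow Z_3$ as the kernel of a map of trivial vector bundles with fibres $\prod_i Hom_\Lambda(P^{\mathbf{d}_i},ker(\gamma_i))$, and recovers $Z_2$ as the open locus where $im(\phi_i)=ker(\gamma_i)$. You merely spell out the details the paper leaves as ``similar to Lemma~\ref{PhiEmpat}'', including the explicit bundle map and the constancy of rank via projectivity of $P^{\mathbf{d}_i}$, and these details are right.
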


\begin{proof}
Let $Y_{2}$ be defined as $Z_{2}$ by changing the property $im\left( \phi
_{i}\right) =ker\left( \gamma _{i}\right) $ into $\gamma _{i}\phi _{i}=0$
for all $i\in \mathbb{Z}.$  Similar to the proof of Lemma \ref{PhiEmpat},
the projection $Y_{2}\rightarrow Z_{3}$ is the kernel of a homomorphism
between two trivial vector bundles with base $Z_{3}$, which has fibres
isomorphic to $\prod_{i\in \mathbb{Z}}Hom_{\Lambda }(P^{\mathbf{d}%
_{i}},ker(\gamma _{i}))$ and so $Y_{2}\rightarrow Z_{3}$ is a vector bundle.
There is an open immersion $Z_{2}\rightarrow Y_{2}$ with image those tuples
with $im\left( \phi _{i}\right) =ker\left( \gamma _{i}\right) $ for all $i$.
Therefore $\rho _{3}$ is the composition of an open immersion with a vector
bundle with base $Z_{3}$, and the proof is complete.
\end{proof}

\begin{lemma}
\label{RhoEmpat}The map $\rho _{4}:Z_{3}\rightarrow
rep_{\mathbf{h},\mathbf{r}}^{\Lambda }$ defined by
\begin{equation*}
\rho _{4}(\left( \gamma _{i},H_{i}\right) _{i\in \mathbb{Z}})=\left(
H_{i}\right) _{i\in \mathbb{Z}}
\end{equation*}%
is the composition of an open immersion with a vector bundle with base $rep_{\mathbf{h},\mathbf{r}}^{\Lambda }$.
\end{lemma}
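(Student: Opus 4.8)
The plan is to factor $\rho_4$ as an open immersion followed by a vector bundle, exactly as in Lemmas \ref{PhiEmpat}, \ref{RhoDua} and \ref{RhoTiga}. First I would enlarge $Z_3$ to the variety $Y_3$ obtained by dropping the requirement that the $\gamma_i$ be epimorphisms: let $Y_3$ consist of the tuples $\left(\gamma_i,H_i\right)_{i\in\mathbb{Z}}$ with $\left(H_i\right)_{i\in\mathbb{Z}}\in rep_{\mathbf{h},\mathbf{r}}^{\Lambda}$ and each $\gamma_i\in Hom_\Lambda\left(M_i,H_i\right)$ an arbitrary $\Lambda$-homomorphism. The forgetful projection $Y_3\to rep_{\mathbf{h},\mathbf{r}}^{\Lambda}$ then has fibre $\prod_{i\in\mathbb{Z}}Hom_\Lambda\left(M_i,H_i\right)$ over the point $\left(H_i\right)_{i\in\mathbb{Z}}$.

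The second step is to prove this projection is a vector bundle, by the same device used for $\pi_4$. I would realise $Y_3$ as the kernel of a homomorphism of trivial vector bundles over $rep_{\mathbf{h},\mathbf{r}}^{\Lambda}$: inside $rep_{\mathbf{h},\mathbf{r}}^{\Lambda}\times\prod_{i\in\mathbb{Z}}Hom\left(M_i,\Bbbk^{\mathbf{h}_i}\right)$ the $\Lambda$-linearity of $\gamma_i$ is cut out by the equations $\left(H_i\right)_\alpha\left(\gamma_i\right)_{s(\alpha)}=\left(\gamma_i\right)_{e(\alpha)}\left(M_i\right)_\alpha$ for all arrows $\alpha$, which define a map into the trivial bundle with fibre $\prod_{i\in\mathbb{Z}}\prod_{\alpha\in Q_1}Hom\left(\left(M_i\right)_{s(\alpha)},\Bbbk^{\left(\mathbf{h}_i\right)_{e(\alpha)}}\right)$. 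Since each $M_i$ is projective, $\dim Hom_\Lambda\left(M_i,H_i\right)$ equals the sum over $a\in Q_0$ of $\left(\mathbf{h}_i\right)_a$ times the multiplicity of $P_a$ in $M_i$, and so is constant as $\left(H_i\right)$ varies over the base. Hence the kernel has constant rank and Proposition \ref{VBundle} shows $Y_3\to rep_{\mathbf{h},\mathbf{r}}^{\Lambda}$ is a vector bundle.

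Finally I would exhibit $Z_3$ as the open subvariety of $Y_3$ on which every $\gamma_i$ is surjective; surjectivity is a maximal-rank, hence open, condition, giving the open immersion $Z_3\to Y_3$. The inclusion $Z_3\subseteq\{\gamma_i\text{ epi for all }i\}$ is immediate from the definition of $comhom_{\mathbf{d},\mathbf{r}}^{\Lambda}$, and the reverse inclusion is the step I expect to be the main obstacle: I must check that every tuple of epimorphisms $\gamma_i:M_i\to H_i$ over a point $\left(H_i\right)\in rep_{\mathbf{h},\mathbf{r}}^{\Lambda}$ actually extends to a full point of $comhom_{\mathbf{d},\mathbf{r}}^{\Lambda}$. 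The key is that $ker\left(\gamma_i\right)$ is independent, up to isomorphism, of the chosen epimorphism: comparing an arbitrary $\gamma_i$ with the epimorphism coming from a fixed presentation $P^{\mathbf{d}_i}\to M_i\to H_i\to 0$ via Schanuel's lemma (both have middle term the projective $M_i$) and cancelling $M_i$ by Krull--Schmidt, one finds $ker\left(\gamma_i\right)$ is a quotient of $P^{\mathbf{d}_i}$, which produces $\phi_i:P^{\mathbf{d}_i}\to M_i$ of rank $\mathbf{r}_i$ with $im\left(\phi_i\right)=ker\left(\gamma_i\right)$. Because $\Lambda$ has global dimension at most two, $ker\left(\phi_{i-1}\right)$ is then projective of dimension vector $\mathbf{k}_i$, hence isomorphic to $M_i$, and any such isomorphism followed by the inclusion into $P^{\mathbf{d}_{i-1}}$ furnishes the monomorphism $\eta_i$ with $im\left(\eta_i\right)=ker\left(\phi_{i-1}\right)$; setting $\partial_i=\eta_i\phi_i$ gives a genuine point of $comhom_{\mathbf{d},\mathbf{r}}^{\Lambda}$ lying over $\left(\gamma_i,H_i\right)$. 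Thus the surjective locus is exactly $Z_3$, and $\rho_4$ is the composite of the open immersion $Z_3\to Y_3$ with the vector bundle $Y_3\to rep_{\mathbf{h},\mathbf{r}}^{\Lambda}$, as required.
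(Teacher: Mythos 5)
Your proposal follows the paper's proof essentially verbatim: enlarge $Z_3$ to the variety $Y_3$ of arbitrary $\Lambda$-homomorphisms $\gamma_i$, realise $Y_3\to rep_{\mathbf{h},\mathbf{r}}^{\Lambda}$ as the kernel of the map of trivial bundles cutting out the $\Lambda$-linearity relations, use projectivity of $M_i$ to get constant fibre dimension $\dim Hom_\Lambda(M_i,H_i)$ and invoke Proposition \ref{VBundle}, then pass to the open epimorphism locus. Your extra Schanuel/Krull--Schmidt argument identifying $Z_3$ with the \emph{entire} surjective locus of $Y_3$ (i.e.\ that every tuple of epimorphisms over a point of $rep_{\mathbf{h},\mathbf{r}}^{\Lambda}$ lifts to a point of $comhom_{\mathbf{d},\mathbf{r}}^{\Lambda}$) is correct and supplies a detail the paper asserts only implicitly via the phrase ``Since $Z_3\subseteq Y_3$ is open''.
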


\begin{proof}
Let $Y_{3}$ be defined as $Z_{3}$, but without the restriction that $\gamma
_{i}$ is an epimorphism.
We prove that the morphism $Y_{3}\rightarrow
rep_{\mathbf{h},\mathbf{r}}^{\Lambda }$ is the kernel
of a map between trivial vector bundles.
Choose $i\in \mathbb{Z}$ and for simplicity write $(\gamma,H)=(\gamma_i,H_i)$.
There is a homomorphism of vector bundles
\begin{equation*}
Hom\left( M_i,\Bbbk ^{\mathbf{h}_i}\right) \times rep_{\mathbf{h}_i,\mathbf{r}_i}^{\Lambda } \rightarrow
\prod\limits_{\alpha :a\rightarrow b}Hom_{\Bbbk }\left( \Bbbk ^{(k_i)_{a}},\Bbbk
^{(h_i)_{b}}\right)\times rep_{\mathbf{h}_i,\mathbf{r}_i}^{\Lambda }
\end{equation*}%
\begin{equation*}
\left( \left( \gamma _{a}\right) ,\left( H_{\alpha }\right)
\right) \mapsto \left( \left( H_{\alpha }\circ \gamma
_{a}-\gamma _{b}\circ (M_i)_{\alpha }\right) ,\left( H_{\alpha }\right) \right) .
\end{equation*}%
On fibres, the kernel is isomorphic to $Hom_{\Lambda }\left(
M_i,H\right) $ which has constant dimension since $M_i$ is
projective, and so the kernel is a sub bundle by Proposition \ref{VBundle}.
Since $Z_3\subseteq Y_3$ is open it follows that $\rho_4$
is the composition of an open immersion with a vector bundle.
\end{proof}

Lemma \ref{rho} follows from the previous four lemmas. We have the following formula relating the
dimension of $comproj_{\mathbf{d},\mathbf{r}}^{\Lambda }$
to the dimension of $rep_{\mathbf{h},\mathbf{r}}^\Lambda$.
This concludes the proof of the theorem.

\begin{corollary}
$$dim (comproj_{\mathbf{d},\mathbf{r}}^{\Lambda }) =
dim (rep_{\mathbf{h},\mathbf{r}}^\Lambda) + \sum_{i\in \mathbb{Z}}(
\mathbf{d}_i^T\mathbf{r}_i - (\Theta^{-1}\mathbf{k}_i)^T\mathbf{k}_i).$$
\end{corollary}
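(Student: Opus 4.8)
The plan is to compute $\dim comhom_{\mathbf{d},\mathbf{r}}^{\Lambda}$ in two ways and eliminate it. The first computation is already available: since $\pi$ is the composition analysed in Lemmas \ref{PhiSatu}--\ref{PhiEmpat}, Corollary \ref{dimpi} records
\[
\dim comhom_{\mathbf{d},\mathbf{r}}^{\Lambda}=\dim comproj_{\mathbf{d},\mathbf{r}}^{\Lambda}+\sum_{i\in\mathbb{Z}}\bigl(\mathbf{h}_i^T\mathbf{h}_i+(\Theta^{-1}\mathbf{k}_i)^T\mathbf{k}_i\bigr).
\]
The second computation runs along $\rho$. By Lemma \ref{rho} and its proof, $\rho=\rho_4\circ\rho_3\circ\rho_2\circ\rho_1$ where $\rho_1$ is an isomorphism (Lemma \ref{RhoSatu}) and each of $\rho_2,\rho_3,\rho_4$ is an open immersion followed by a vector bundle (Lemmas \ref{RhoDua}, \ref{RhoTiga}, \ref{RhoEmpat}), the last having base $rep_{\mathbf{h},\mathbf{r}}^{\Lambda}$. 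Open immersions and isomorphisms preserve dimension, so
\[
\dim comhom_{\mathbf{d},\mathbf{r}}^{\Lambda}=\dim rep_{\mathbf{h},\mathbf{r}}^{\Lambda}+(\text{rank of }\rho_2)+(\text{rank of }\rho_3)+(\text{rank of }\rho_4),
\]
where each rank is the fibre dimension of the corresponding vector bundle identified in those proofs.

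The key step is to evaluate these three fibre dimensions. I would use two facts. First, for any projective $P^{\mathbf{a}}$ and any module $N$ one has $\dim_{\Bbbk}Hom_\Lambda(P^{\mathbf{a}},N)=\mathbf{a}^T\dim(N)$, since $Hom_\Lambda(P_a,N)\cong e_aN$. Second, $M_i$ is projective of dimension vector $\mathbf{k}_i$, hence $M_i\cong P^{\Theta^{-1}\mathbf{k}_i}$. Feeding in the dimension vectors of the relevant targets then gives: the fibre of $\rho_2$ is $\prod_i Hom_\Lambda(M_i,ker(\partial_{i-1}))$ with $ker(\partial_{i-1})\cong M_i$ of dimension vector $\mathbf{k}_i$, contributing $\sum_i(\Theta^{-1}\mathbf{k}_i)^T\mathbf{k}_i$; the fibre of $\rho_3$ is $\prod_i Hom_\Lambda(P^{\mathbf{d}_i},ker(\gamma_i))$ with $ker(\gamma_i)=im(\phi_i)$ of dimension vector $\mathbf{r}_i$, contributing $\sum_i\mathbf{d}_i^T\mathbf{r}_i$; and the fibre of $\rho_4$ is $\prod_i Hom_\Lambda(M_i,H_i)$ with $H_i$ of dimension vector $\mathbf{h}_i$, contributing $\sum_i(\Theta^{-1}\mathbf{k}_i)^T\mathbf{h}_i$.

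Finally I would substitute the $\rho$-expression into Corollary \ref{dimpi} and simplify. Observe that the $\rho_2$-contribution $\sum_i(\Theta^{-1}\mathbf{k}_i)^T\mathbf{k}_i$ is exactly the $\pi_4$-fibre contribution (both equal $\sum_i\dim Hom_\Lambda(M_i,M_i)$, since $M_i\cong ker(\partial_{i-1})$), so these cancel, leaving
\[
\dim comproj_{\mathbf{d},\mathbf{r}}^{\Lambda}=\dim rep_{\mathbf{h},\mathbf{r}}^{\Lambda}+\sum_{i\in\mathbb{Z}}\bigl(\mathbf{d}_i^T\mathbf{r}_i+(\Theta^{-1}\mathbf{k}_i)^T\mathbf{h}_i-\mathbf{h}_i^T\mathbf{h}_i\bigr),
\]
which I would then reorganise into the asserted form using the defining relation $\mathbf{k}_i=\mathbf{h}_i+\mathbf{r}_i$. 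I expect the only genuine obstacle to be bookkeeping rather than ideas: one must keep the degree shifts straight (the fibre over degree $i$ is governed by $ker(\partial_{i-1})\cong M_i$, i.e. by $\mathbf{k}_i$) and correctly pair the $\rho_2$- and $\pi_4$-contributions so that the cancellation is visible; once that is done the remaining manipulation is just the linear-algebra identity $\dim Hom_\Lambda(P^{\mathbf{a}},N)=\mathbf{a}^T\dim(N)$ together with $\mathbf{k}_i=\mathbf{h}_i+\mathbf{r}_i$.
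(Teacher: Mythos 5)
Your term-by-term computation of the fibre of $\rho$ is correct: reading off the vector bundles in Lemmas \ref{RhoDua}, \ref{RhoTiga} and \ref{RhoEmpat}, and using $\dim_{\Bbbk}Hom_{\Lambda}(P^{\mathbf{a}},N)=\mathbf{a}^{T}\dim(N)$ together with $M_{i}\cong P^{\Theta^{-1}\mathbf{k}_{i}}$, does give the contributions $(\Theta^{-1}\mathbf{k}_{i})^{T}\mathbf{k}_{i}$, $\mathbf{d}_{i}^{T}\mathbf{r}_{i}$ and $(\Theta^{-1}\mathbf{k}_{i})^{T}\mathbf{h}_{i}$, and after cancelling the $\rho_{2}$-term against the $\pi_{4}$-term in Corollary \ref{dimpi} you arrive at
\[
\dim comproj_{\mathbf{d},\mathbf{r}}^{\Lambda}=\dim rep_{\mathbf{h},\mathbf{r}}^{\Lambda}+\sum_{i\in\mathbb{Z}}\bigl(\mathbf{d}_{i}^{T}\mathbf{r}_{i}+(\Theta^{-1}\mathbf{k}_{i})^{T}\mathbf{h}_{i}-\mathbf{h}_{i}^{T}\mathbf{h}_{i}\bigr).
\]
The gap is your last sentence: this cannot be ``reorganised into the asserted form'' using $\mathbf{k}_{i}=\mathbf{h}_{i}+\mathbf{r}_{i}$. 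Agreement with the stated right-hand side would force $(\Theta^{-1}\mathbf{k}_{i})^{T}(\mathbf{h}_{i}+\mathbf{k}_{i})=\mathbf{h}_{i}^{T}\mathbf{h}_{i}$ for every $i$, which already fails whenever $\mathbf{h}_{i}=0$ and $\mathbf{k}_{i}=\mathbf{r}_{i}\neq 0$.

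The discrepancy is not in your method but between your (correct) fibre computation and the paper's one-line proof, which asserts without computation that the fibre of $\rho$ has dimension $\sum_{i}(\mathbf{h}_{i}^{T}\mathbf{h}_{i}+\mathbf{d}_{i}^{T}\mathbf{r}_{i})$. A minimal test: take $\Lambda=\Bbbk$ and the complex $0\rightarrow\Bbbk\rightarrow\Bbbk\rightarrow 0$ in degrees $1,0$ with $\mathbf{r}_{1}=1$, so $\mathbf{k}_{1}=1$, $\mathbf{h}_{1}=0$ and all other data vanish. Then $comproj_{\mathbf{d},\mathbf{r}}^{\Lambda}=\Bbbk^{\ast}$ has dimension $1$, $comhom_{\mathbf{d},\mathbf{r}}^{\Lambda}\cong(\Bbbk^{\ast})^{2}$ has dimension $2$, and $rep_{\mathbf{h},\mathbf{r}}^{\Lambda}$ is a point; Corollary \ref{dimpi} checks out as $1=2-1$, your formula gives $0+(1+0-0)=1$, but the stated formula gives $0+(1\cdot 1-1\cdot 1)=0$. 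So do not force the final algebra: carried through honestly, your argument proves the displayed identity above rather than the one asserted, and the asserted identity does not hold as written.
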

\begin{proof}
The  sum $\sum_{i\in \mathbb{Z}}
(\mathbf{h}_i^T\mathbf{h}_i+\mathbf{d}_i^T\mathbf{r}_i)$ is the dimension
of the fibre of $\rho$. The corollary now follows from Corollary \ref{dimpi}.
\end{proof}

\section{Applications and Examples}

We start by proving the two corollaries stated in the introduction.

\begin{proof}[Proof of Corollary 2.]
Each of the morphisms $\pi_i:X_{i-1}\rightarrow X_i$ induce
a bijection between the irreducible components of $X_{i-1}$ and
its image $im(\pi_i)=X_i$.
As $\pi$ is surjective, we have a bijection between the irreducible
components of $comproj^\Lambda_{\mathbf{d},\mathbf{r}}$
and $comhom^\Lambda_{\mathbf{d},\mathbf{r}}$. Similarly, there
is a bijection for $comhom^\Lambda_{\mathbf{d},\mathbf{r}}$
and the image of $\rho$. The corollary follows.
\end{proof}

\begin{proof}[Proof of Corollary 3.]
Since $\Lambda$ is hereditary, $rep_{\mathbf{h}%
}^{\Lambda }$ is vector space which is both smooth and rational. Using the
morphisms $\rho_i$ and $\pi_i$ we therefore have that $comproj_{\mathbf{d},\mathbf{r}}^{\Lambda }$ is both smooth and rational. The irreducibility
of $comproj_{\mathbf{d},\mathbf{%
r}}^{\Lambda }$ follows from Corollary 2.
\end{proof}

We end this paper with an example.

\begin{example}
Let $\Lambda$ be given the quiver
\begin{equation*}
\xymatrix{1 \ar[r]^\alpha & 2 \ar[r]^\beta & 3}
\end{equation*}%
with the relation $\beta \alpha =0.$ Consider the dimension array
\begin{equation*}
\mathbf{d}=(\mathbf{d}_{0},\mathbf{d}_{1},\mathbf{d}_{2})=\left( \left(
\begin{array}{c}
2 \\
2 \\
2%
\end{array}%
\right) ,\left(
\begin{array}{c}
2 \\
4 \\
1%
\end{array}%
\right) ,\left(
\begin{array}{c}
2 \\
3 \\
2%
\end{array}%
\right) \right)
\end{equation*}%
Then $comproj_{\mathbf{d}}^{\Lambda }$ consist of matrices $x$ and $y$,%
\begin{equation*}
0\rightarrow P_{1}^{2}\oplus P_{2}^{3}\oplus P_{3}^{2}\xrightarrow{x}%
P_{1}^{2}\oplus P_{2}^{4}\oplus P_{3}^{1}\xrightarrow{y}P_{1}^{2}\oplus
P_{2}^{2}\oplus P_{3}^{2}\rightarrow 0
\end{equation*}%
where%
\begin{equation*}
x=\left(
\begin{array}{ccc}
x_{1} & x_{2} & 0 \\
0 & x_{3} & x_{4} \\
0 & 0 & x_{5}%
\end{array}%
\right) \text{ and }y=\left(
\begin{array}{ccc}
y_{1} & y_{2} & 0 \\
0 & y_{3} & y_{4} \\
0 & 0 & y_{5}%
\end{array}%
\right)
\end{equation*}%
and each $x_{i}$, $y_{i}$ is a matrix block, e.g.%
\begin{equation*}
x_{2}=\left(
\begin{array}{cc}
x_{2,11} & x_{2,12} \\
x_{2,21} & x_{2,22}%
\end{array}%
\right) .
\end{equation*}%
The defining equations of $comproj_{\mathbf{d}}^{\Lambda }$ are obtained
from the matrix equations%
\begin{equation*}
yx=0\text{ and }y_{2}x_{5}=0,
\end{equation*}%
the first giving a complex and the second coming from the relation $\beta
\alpha =0$. Let
\begin{equation*}
\mathbf{r}=(\mathbf{r}_{1},\mathbf{r}_{2})=\left( \left(
\begin{array}{c}
0 \\
2 \\
1%
\end{array}%
\right) ,\left(
\begin{array}{c}
0 \\
1 \\
1%
\end{array}%
\right) \right)
\end{equation*}%
denote the ranks of the matrices, i.e. the dimension vector of the images of
$y$ and $x$. We compute and find%
\begin{equation*}
M_{0}=M_{1}=M_{2}=P^{\mathbf{d}_{0}}.
\end{equation*}%
The dimension vectors of the corresponding homology modules are%
\begin{equation*}
\mathbf{h}=(\mathbf{h}_{0},\mathbf{h}_{1},\mathbf{h}_{2})=\left( \left(
\begin{array}{c}
2 \\
2 \\
3%
\end{array}%
\right) ,\left(
\begin{array}{c}
2 \\
3 \\
3%
\end{array}%
\right) ,\left(
\begin{array}{c}
2 \\
4 \\
4%
\end{array}%
\right) \right) .
\end{equation*}%
Now $rep_{\mathbf{h}_{0}}^{\Lambda }$ has three irreducible components,
given as the orbit closures of the modules%
\begin{equation*}
P_{1}^{2}\oplus P_{3}^{3},~~~S_{1}^{2}\oplus P_{2}^{2}~~~\text{and~~~}%
P_{1}\oplus P_{2}\oplus P_{3}^{2}\oplus S_{1},
\end{equation*}%
where $S_{i}$ is the simple representation at vertex $i$. Of these, only the
latter two have the required presentation%
\begin{equation*}
P^{\mathbf{d}_{1}}\rightarrow M_{0}\rightarrow H_{0}\rightarrow 0.
\end{equation*}%
The variety $rep_{\mathbf{h}_{1}}^{\Lambda }$ also has three irreducible
components, given by%
\begin{equation*}
S_{1}^{2}\oplus P_{2}^{3},~~~P_{1}^{2}\oplus P_{2}\oplus P_{3}^{2}~~~\text{%
and}~~\ S_{1}\oplus P_{1}\oplus P_{2}^{2}\oplus P_{3},
\end{equation*}%
where only the latter two have the required presentation. In total there are
therefore 4 irreducible components in $rep_{\mathbf{h},\mathbf{r}}^{\Lambda }
$, and therefore also in $comproj_{\mathbf{d},\mathbf{r}}^{\Lambda }.$
\end{example}

\vspace{1.5cc}
\noindent{\bf Acknowledgement.}
The authors would like to thank the referee for many valuable suggestions.


\vspace{1cc}

\noindent \textsc{Darmajid} : Algebra Research Division, Institut Teknologi Bandung.\\
Jalan Ganeca no.10, Bandung, Indonesia.\\
Email : darmajid@students.itb.ac.id.\\
\newline
\noindent \textsc{Bernt Tore Jensen} : Gj\o vik University College \\
Teknologivn 22, 2815, Gj\o vik, Norway.\\
Email : bernt.jensen@hig.no

\begin{thebibliography}{9}
\bibitem{ASS06} \textsc{I. Assem, D. Simson, A. Skowronski}, \emph{Elements
of The Representation Theory of Associative Algebras, in : Techniques of
Representation Theory}, Vol. 1, Cambridge University Press, New York, (2006).

\bibitem{JSZ05} \textsc{B. T. Jensen, X. Su, A. Zimmermann}, Degenerations
for derived categories, \emph{J. Pure Appl. Algebra} \textbf{198}, no. 1-3,
281--295, (2005).

\bibitem{JS05} \textsc{B. T. Jensen, X. Su}, Singularities in derived
categories, \emph{Manuscripta Mathematica}, \textbf{117}, no. 4, 475--490,
(2005).

\bibitem{P97} \textsc{J. L. Potier}, \emph{Lectures on vector bundles},
Translated by A. Maciocia, Cambridge Studies in Advanced Mathematics,
\textbf{54}, Cambridge University Press, Cambridge, (1997), viii+251 pp.
ISBN: 0-521-48182-1.

\bibitem{R86} \textsc{C. Riedtmann}, Degenerations for representations of
quivers with relations, \emph{Annales Scientifiques de l}'\'{E}\emph{cole
Normale Sup\'{e}rieure,} \textbf{4}, 275--301, (1986).
\bibitem{S85} \textsc{A. Schofield}, Bounding the global dimension in terms of the dimension,
\emph{Bull. London Math. Soc.}, \textbf{17},
393--394, (1985).
\bibitem{SH-Z01} \textsc{M. Saorin, B. Huisgen-Zimmermann}, Geometry of
chain complexes and outer automorphisms under derived equivalence, \emph{%
Transactions of the American Mathematical Society}, \textbf{353},
4757--4777, (2001).
\end{thebibliography}
\end{document}